\numberwithin{equation}{section}
	\def\l@subsection{\@tocline{2}{0pt}{2.5pc}{5pc}{}}
\newtheorem{Thm}[equation]{Theorem}
\newtheorem{ThmIntro}{Theorem}
\newtheorem*{Thm*}{Theorem}
\newtheorem{Prop}[equation]{Proposition}
\newtheorem{Lem}[equation]{Lemma}
\theoremstyle{remark}
\newtheorem{Def}[equation]{Definition}
\newtheorem{Not}[equation]{Notation}
\newtheorem{Exa}[equation]{Example}
\newtheorem*{Conv*}{Conventions}
\newtheorem{Rec}[equation]{Recollection}
\newtheorem{Rem}[equation]{Remark}
\newtheorem{Rems}[equation]{Remarks}
\theoremstyle{definition}
\newtheorem*{Ack*}{Acknowledgements}
\newtheorem*{Cont*}{Contents}
\newcommand{\nc}{\newcommand}
\nc{\dmo}{\DeclareMathOperator}
\dmo{\Ab}{\mathsf{Ab}}
\dmo{\AbMon}{AbMon}
\dmo{\Abelem}{Abelem}
\dmo{\Aut}{Aut}
\dmo{\Bi}{bi}
\dmo{\Bisets}{Bisets}
\dmo{\Br}{Br}
\dmo{\DER}{\mathsf{DER}}
\dmo{\ADDER}{\mathsf{ADDER}}
\dmo{\coev}{coev}
\dmo{\Coloc}{Coloc}
\dmo{\ev}{ev}
\dmo{\Fib}{Fib}
\dmo{\Free}{Free}
\dmo{\Id}{Id}
\dmo{\loc}{Loc}
\dmo{\rmI}{I}
\dmo{\rmL}{L}
\dmo{\rmR}{R}
\dmo{\Spc}{Spc}
\dmo{\thick}{Thick}
\dmo{\chara}{char}%
\dmo{\coh}{coh} 
\dmo{\Coind}{CoInd}
\dmo{\coker}{coker}
\dmo{\cone}{Cone}
\dmo{\Cone}{Cone}
\dmo{\Der}{D}
\dmo{\Ch}{Ch}
\nc{\Rder}{\mathrm{R}} 
\nc{\Lder}{\mathrm{L}} 
\dmo{\Khocat}{K}
\dmo{\End}{End}
\dmo{\Ext}{Ext}
\dmo{\rmH}{H}
\dmo{\Ho}{Ho}
\dmo{\Hom}{Hom}
\dmo{\id}{id}
\dmo{\Img}{Im}
\dmo{\incl}{incl}
\dmo{\Ind}{Ind}
\dmo{\ind}{ind}
\dmo{\Indk}{Ind_{\kappa}}
\dmo{\CoInd}{CoInd}
\dmo{\PSh}{PSh}
\dmo{\Top}{Top}
\dmo{\Ker}{Ker}
\dmo{\Les}{Les}
\dmo{\Map}{Map}%
\dmo{\Mod}{\mathsf{Mod}}
\dmo{\GrMod}{GrMod}
\dmo{\lax}{lax}
\dmo{\modname}{mod}%
\dmo{\grmod}{grmod}
\dmo{\Mor}{Mor}%
\dmo{\Obj}{Obj}
\dmo{\Or}{Or}
\dmo{\Oldorbit}{\mathcal{O}} 
\dmo{\Fix}{Fix} 
\dmo{\Ev}{Ev} 
\dmo{\pr}{pr}
\dmo{\canin}{in} 
\dmo{\Proj}{Proj} 
\dmo{\Inj}{Inj} 
\dmo{\proj}{proj}
\dmo{\Qcoh}{Qcoh}
\dmo{\rank}{rank}
\dmo{\Res}{Res}
\dmo{\res}{res}
\dmo{\Defl}{Def}
\dmo{\Infl}{Inf}
\dmo{\Iso}{Iso}
\dmo{\Rname}{R}
\dmo{\Sp}{Sp} 
\dmo{\SH}{\mathsf{SH}}
\nc{\SHp}{\SH_{(p)}}
\dmo{\smallb}{b}
\dmo{\smallperf}{perf}
\dmo{\Spec}{Spec}
\dmo{\Spech}{Spec^h}
\dmo{\Stab}{Stab}
\dmo{\stab}{stab}
\dmo{\supp}{Supp}
\dmo{\switch}{switch}
\dmo{\TTR}{TTR}
\dmo{\Spanname}{{\sf Span}}
\dmo{\map}{map}
\dmo{\Rel}{Rel}
\nc{\Ivo}[1]{{\color{OliveGreen}#1}}
\nc{\Paul}[1]{{\color{Blue}#1}}
\nc{\Pout}[1]{\Paul{\sout{#1}}}
\nc{\Iout}[1]{\Ivo{\sout{#1}}}
\nc{\Prule}{\Paul{\smallbreak\hrule\smallbreak}}
\nc{\SEcell}{\rotatebox[origin=c]{45}{$\Downarrow$}} 
\nc{\NEcell}{\rotatebox[origin=c]{135}{$\Downarrow$}} 
\nc{\SWcell}{\rotatebox[origin=c]{-45}{$\Downarrow$}} 
\nc{\NWcell}{\rotatebox[origin=c]{-135}{$\Downarrow$}} 
\nc{\Scell}{\rotatebox[origin=c]{0}{$\Downarrow$}} 
\nc{\Ncell}{\rotatebox[origin=c]{0}{$\Uparrow$}} 
\nc{\Ecell}{\Rightarrow}
\nc{\oEcell}[1]{\overset{\scriptstyle #1}{\Ecell}}
\nc{\isoEcell}{\overset{\sim}{\,\Ecell\,}}
\nc{\isocell}[1]{\undersett{ #1}\isoEcell}
\nc{\Isocell}[1]{\undersett{ #1}{\overset{\sim}{\Longrightarrow}}}
\nc{\Wcell}{\rotatebox[origin=c]{90}{$\Uparrow$}} 
\nc{\Span}{\Spanname}
\nc{\Spanhat}{\textrm{\sf S}\widehat{\textrm{\sf pan}}} 
\nc{\tSpan}{\pih{\Spanname}}
\nc{\IFF}{$\Leftrightarrow$}
\nc{\ass}{\mathrm{ass}} 
\nc{\lun}{\mathrm{lun}} 
\nc{\run}{\mathrm{run}} 
\nc{\fun}{\mathrm{fun}} 
\nc{\un}{\mathrm{un}} 
\nc{\Crich}{\underline{\cat{C}}}
\nc{\uA}{\underline{A}}
\nc{\doublequot}[3]{#1\backslash #2/#3}
\nc{\HGK}{\doublequot HGK}
\nc{\quadtext}[1]{\quad\textrm{#1}\quad}
\nc{\qquadtext}[1]{\qquad\textrm{#1}\qquad}
\nc{\PZG}{\cat{C}_{\bbZ}(\bbZ G)}
\nc{\TTRK}{\TTR(\cat K)}
\nc{\psets}{\mathsf{-sets}_\sbull}
\nc{\Gsets}{G\mathsf{-sets}}
\nc{\Hsets}{H\mathsf{-sets}}
\nc{\AddK}{\Add^{\Sigma}(\cat K)}
\nc{\adj}{\dashv\,}
\nc{\adjto}{\rightleftarrows}
\nc{\AK}{A\MModcat{K}}
\nc{\BK}{B\MModcat{K}}
\nc{\bbA}{\mathbb{A}}
\nc{\bbB}{\mathbb{B}}
\nc{\bbC}{\mathbb{C}}
\nc{\bbD}{\mathbb{D}}
\nc{\bbF}{\mathbb{F}}
\nc{\bbI}{\mathbb{I}}
\nc{\bbM}{\mathbb{M}}
\nc{\bbN}{\mathbb{N}}
\nc{\bbP}{\mathbb{P}}
\nc{\bbQ}{\mathbb{Q}}
\nc{\bbR}{\mathbb{R}}
\nc{\bbZ}{\mathbb{Z}}
\nc{\bbZp}{\mathbb{Z}_{(p)}}
\nc{\Sphere}{\mathbb{S}} 
\nc{\cat}[1]{\mathcal{#1}}
\nc{\Displ}{\displaystyle}
\nc{\ie}{{\sl i.e.}\ }
\nc{\cf}{{\sl cf.}\ }
\nc{\into}{\mathop{\rightarrowtail}}
\nc{\inv}{^{-1}}
\nc{\isoto}{\buildrel \sim\over\to}
\nc{\isotoo}{\mathop{\buildrel \sim\over\too}}
\nc{\onto}{\mathop{\twoheadrightarrow}}
\nc{\too}{\mathop{\longrightarrow}\limits}
\nc{\xytriangle}[7]{\xymatrix@C=#7em{#1\ar[r]^-{\Displ #4} & #2 \ar[r]^-{\Displ #5}&#3\ar[r]^-{\Displ #6}&T #1}}
\nc{\ababs}{{\sl ab absurdo}}
\nc{\adh}[1]{\overline{#1}}
\nc{\adhoc}{{\sl ad hoc}}
\nc{\adhpt}[1]{\adh{\{#1\}}}
\nc{\afortiori}{{\sl a fortiori}}
\nc{\aka}{{a.\,k.\,a.}\ }
\nc{\ala}{{\sl \`a la}\ }
\nc{\apriori}{{\sl a priori}}
\nc{\Autcat}[1]{\Aut_{\cat #1}}
\nc{\cO}{\mathcal{O}}
\nc{\calO}{\mathcal{O}}
\nc{\eg}{{\sl e.g.}}
\nc{\eps}{\varepsilon}
\nc{\equalby}[1]{\overset{\textrm{#1}}{=}}
\nc{\gm}{\mathfrak{m}}
\nc{\Homcat}[1]{\Hom_{\cat #1}}
\nc{\Morcat}[1]{\Mor_{\cat #1}}
\nc{\hook}{\hookrightarrow}
\nc{\Idcat}[1]{\Id_{\cat{#1}}}
\nc{\ideal}[1]{\langle #1\rangle}
\nc{\ihom}{{\mathsf{hom}}} 
\nc{\ihomcat}[1]{\ihom_{\cat #1}}
\nc{\Kcat}[1]{#1\MModcat{K}}
\nc{\KP}{\cat{K}_{\cat P}}
\nc{\loccit}{{\sl loc.\ cit.}}
\nc{\lind}{\rmL\!}
\nc{\RR}{\rmR\!}
\nc{\Lotimes}{\otimes^{\rmL}}
\nc{\Mid}{\,\bigm|\,}
\nc{\MMod}{\,\textsf{-}\Mod}%
\nc{\Exact}{\mathfrak K\,\text{-}\mathrm{Exa}^\mathbb Z/2_\infty} 
\nc{\MModcat}[1]{\MMod_{\cat #1}}%
\nc{\mmod}{\,\text{--}\modname}%
\nc{\mmodb}{\mmod^\sbull}%
\nc{\op}{{\mathrm{op}}}
\nc{\co}{{\mathrm{co}}}
\nc{\costar}{**}
\nc{\oto}[1]{\overset{#1}\to}
\nc{\ointo}[1]{\overset{#1}{\rightarrowtail}}
\nc{\loto}[1]{\overset{#1}{\leftarrow}}
\nc{\otoo}[1]{\overset{#1}{\,\longrightarrow\,}}
\nc{\lotoo}[1]{\overset{#1}{\,\longleftarrow\,}}
\nc{\ourfrac}[2]{\genfrac{}{}{0pt}{}{\Displ #1}{\scriptstyle #2}}
\nc{\ouriff}{\Leftrightarrow}
\nc{\oursetminus}{\!\smallsetminus\!}
\nc{\potimes}[1]{^{\otimes #1}}
\nc{\pproj}{\,\text{-}\proj}
\nc{\ptimes}[1]{^{\times #1}}
\nc{\dd}[1]{_{{\scriptscriptstyle(#1)}}}
\nc{\uu}[1]{^{{\scriptscriptstyle(#1)}}}
\nc{\pushout}{\textrm{\rm p.o.}}
\nc{\qp}{q_{_{\scriptstyle \cat P}}\!}%
\nc{\Rcat}[1]{\Rname_{\cat #1}^\sbull}
\nc{\rdto}{}
\nc{\restr}[1]{{|_{\scriptstyle #1}}}
\nc{\RK}{\Rcat{K}}
\nc{\sbull}{{\scriptscriptstyle\bullet}}
\nc{\SET}[2]{\bigl\{\,#1\Mid#2\,\bigr\}}
\nc{\SHA}{\SH{}^{\bbA^{1}}}
\nc{\SHfin}{\SH^{\text{\rm fin}}}
\nc{\smat}[1]{\left(\begin{smallmatrix} #1 \end{smallmatrix}\right)}
\nc{\SpcAK}{\Spc(A\MModcat{K})}
\nc{\SpcK}{\Spc(\cat K)}
\nc{\suppcat}[1]{\supp(\cat #1)}
\nc{\then}{\Rightarrow}
\nc{\tideal}[1]{\ideal{#1}}
\nc{\unit}{\mathbbm{1}}
\nc{\unitcat}[1]{\unit_{\cat #1}}
\nc{\vcorrect}[1]{{\vphantom{\vbox to #1em{}}}}
\nc{\onept}{\mathrm{B}} 
\nc{\undersett}[1]{\underset{\scriptstyle #1}}
\nc\noloc{\nobreak\mspace{6mu plus 1mu}{:}\nonscript\mkern-\thinmuskip\mathpunct{}\mspace{2mu}}
\nc{\HG}{\!{}^{^H}\overline{G}}
\nc{\uY}{\widetilde{Y}}
\nc{\ADD}{\mathsf{ADD}}
\nc{\MONADD}{\mathsf{MONADD}}
\nc{\SMONADD}{\mathsf{SMONADD}}
\nc{\Add}{\mathsf{Add}}
\nc{\SAD}{\mathsf{SAD}}
\nc{\Sad}{\mathsf{Sad}}
\nc{\Cat}{\mathsf{Cat}}
\nc{\CCat}{\textsf{-}\mathsf{Cat}}
\nc{\CAT}{\mathsf{CAT}}
\nc{\Dk}{\dual_{\kappa}}
\nc{\Dkk}{\dual_{\kappa'}}
\nc{\bs}{\backslash}
\nc{\biCpt}{\mathrm{biCpt}}
\nc{\biLCpt}{\mathrm{biLCpt}} 
\nc{\Groupoid}{\mathsf{Groupoid}}
\nc{\groupoid}{\mathsf{gpd}}
\nc{\gpd}{\groupoid}
\nc{\faithful}{\mathsf{faithful}}
\nc{\faith}{\mathsf{faithf}}
\nc{\exact}{\mathsf{ex}}
\nc{\smallfaithful}{\mathsf{f}}
\nc{\smallfused}{\mathsf{fus}}
\nc{\groupoidf}{\groupoid{}^{\smallfaithful}}
\nc{\groconn}{\groupoid_{\mathsf{conn}}}
\nc{\gps}{\mathsf{groups}} 
\nc{\group}{\mathsf{group}} 
\nc{\groupshort}{\mathsf{gr}}
\nc{\gpdG}{{\groupoidf_{\!\smallslash\!G}}} 
\nc{\GinG}{{\groupoidf_{G}}}
\nc{\gpdGfuz}{{\groupoid^{\smallfused}_{\!\smallslash\!G}}} 
\nc{\spanG}{{\widehat{\mathsf{gp}\,\,}\!\!\mathsf{d}}{}^\smallfaithful_{\!{}^{\scriptscriptstyle/}\!G}}
\nc{\biset}{\mathsf{biset}} 
\nc{\rfree}{\mathsf{rf}} 
\nc{\bifree}{\mathsf{bif}} 
\nc{\conj}{\mathsf{conj}} 
\nc{\smallslash}{{}^{\scriptscriptstyle/}}
\nc{\smallbs}{{}^{\scriptscriptstyle\backslash}}
\nc{\doublebs}{\smallbs\!\smallbs}
\nc{\Set}{\mathsf{Set}}
\nc{\set}{\mathsf{set}} 
\nc{\sset}{\textrm{-}\set}
\nc{\ssetfused}{\textrm{-}\underline{\set}} 
\nc{\ssetfuz}{\sset^{\smallfused}} 
\nc{\Comp}{\mathsf{Top}^{\mathsf{comp}}}
\nc{\pih}[1]{\tau_{1}#1}
\nc{\all}{\mathsf{all}}
\dmo{\Fun}{\mathrm{Fun}} 
\dmo{\PsFun}{\mathsf{PsFun}} 
\dmo{\PsFunlax}{\mathsf{PsFun}_{\mathsf{lax}}}
\dmo{\PsFunoplax}{\mathsf{PsFun}_{\mathsf{oplax}}}
\dmo{\BCDex}{\mathsf{BCDex}_{\II\mathsf{-str}}}
\dmo{\BCDexdex}{\mathsf{BCDex}_{\II\mathsf{-dex}}}
\dmo{\biMack}{\mathsf{Mack}} 
\dmo{\Mackey}{\mathsf{Mack}} 
\nc{\MMackey}{\,\textsf{-}\Mackey} 
\dmo{\twoFun}{2\mathsf{Fun}}
\nc{\Muniv}{\cat{M}^{\mathsf{univ}}}
\nc{\lG}{{}_{{\color{Gray}\scriptscriptstyle G}}}
\nc{\lH}{{}_{{\color{Gray}\scriptscriptstyle H}}}
\nc{\rG}{_{{\color{Gray}\!\scriptscriptstyle G}}}
\nc{\rH}{_{{\color{Gray}\!\scriptscriptstyle H}}}
\nc{\rK}{_{{\color{Gray}\!\scriptscriptstyle K}}}
\nc{\dual}{\Delta}
\nc{\ra}{\rightarrow}
\nc{\xra}{\xrightarrow}
\nc{\lto}{\leftarrow}
\nc{\olto}[1]{\overset{#1}\lto}
\nc{\C}{\mathbb{C}} 
\nc{\Cont}{\mathrm{C}} 
\nc{\Rep}{\mathrm{R}} 
\nc{\KK}{\mathsf{KK}} 
\nc{\Calg}{\mathsf{C^*\text{-}Alg}} 
\nc{\Kth}{\mathrm{K}} 
\nc{\Cell}[1]{\mathsf{Cell}(#1)}
\nc{\bigCell}[1]{\mathsf{Cell}(#1)^\mathrm{big}}
\nc{\bigC}{{\cat{C}^\mathrm{big}}} 
\nc{\bigT}{{\cat{T}_\mathrm{big}}} 
\nc{\Modules}{\mathsf{Mod}}
\nc{\Alg}{\mathsf{Alg}}
\nc{\Sep}{\mathsf{Sep}}
\nc{\BurnG}{\cat{A}(G)}
\nc{\Loc}[1]{\loc\langle\, #1 \,\rangle}
\nc{\Thick}[1]{\thick\langle\, #1 \,\rangle}
\nc{\tensLoc}[1]{\loc_\otimes\!\langle\, #1 \,\rangle}
\nc{\tensThick}[1]{\thick_\otimes\!\langle\, #1 \,\rangle}
\nc{\Cstar}[1]{\textrm{C*alg}^{#1}}
\nc{\Cstarsep}[1]{\textrm{C*alg}_{\textrm{sep}}^{#1}}
\nc{\inftyKK}[1]{\mathbf{KK}^{#1}} 
\nc{\inftyKKloc}[1]{\mathbf{KK}^{#1}_{\oplus}} 
\nc{\inftyKKsep}[1]{\mathbf{KK}^{#1}_{\mathsf{sep}}} 
\nc{\inftyInd}[1]{\mathbf{Ind}(#1)}
\nc{\inftyCell}[1]{\mathbf{Cell}(#1)}
\nc{\inftyCellsep}[1]{\mathbf{Cell}(#1)_{\mathsf{sep}}}
\nc{\inftyFun}{\mathrm{Fun}}
\nc{\kk}[1]{\mathrm{kk}^{#1}}
\nc{\kksep}[1]{\mathrm{kk}_{\mathrm{sep}}^{#1}}
\nc{\cD}{\cat{D}}
\nc{\cG}{\cat{G}}
\nc{\cM}{\cat{M}}
\nc{\cN}{\cat{N}}
\nc{\DD}{\cat{D}}
\nc{\MM}{\cat{M}}
\nc{\NN}{\cat{N}}
\nc{\GG}{\mathbb{G}}
\nc{\gammap}[1]{\gamma^{(#1)}}
\nc{\what}[1]{\widehat{\cat{#1}}}
\nc{\und}[1]{{\kern1pt\underline{\kern-1pt{#1}\kern-1.5pt}\kern1.5pt}}
\nc{\Funplus}{\Fun_{+}}
\nc{\Mack}[1]{(Mack\,\ref{Mack-#1})}
\begin{document}


\title{A general Greenlees--May splitting principle}
\author{Serge Bouc}
\author{Ivo Dell'Ambrogio}
\author{Rub\'en Martos}
\date{\today}

\address{\ \smallbreak
\noindent SB: Univ.\,de Picardie - Jules Verne, LAMFA UMR 7352, F-80039 Amiens, France
}
\email{serge.bouc@u-picardie.fr}
\urladdr{https://www.lamfa.u-picardie.fr/bouc/}

\address{\ \smallbreak
\noindent ID: Univ.\,Lille, CNRS, UMR 8524 - Laboratoire Paul Painlev\'e, F-59000 Lille, France
}
\email{ivo.dell-ambrogio@univ-lille.fr}
\urladdr{https://idellambrogio.github.io/}

\address{\ \smallbreak
\noindent RM: Univ.\,Lille, CNRS, UMR 8524 \!-\! Laboratoire Paul Painlev\'e, F-59000 Lille, France
}
\email{ruben.martos@univ-lille.fr}
\urladdr{https://sites.google.com/view/ruben-martos/}

\begin{abstract}
In equivariant topology, Greenlees and May used Mackey functors to show that, rationally, the stable homotopy category of $G$-spectra over a finite group~$G$ splits as a product of simpler module categories. 
We extend the algebraic part (also independently proved by Th\'evenaz and Webb) of this classical result to Mackey modules over an arbitrary Green functor, and use the case of the complex representation ring Green functor to obtain an algebraic model of the rational equivariant Kasparov category of $G$-cell algebras. 
\end{abstract}

\subjclass[2020]{
20C99, 
19K35, 
19L47, 
55Q91.
}
\keywords{Green functors, equivariant topological K-theory, Kasparov theory}

\thanks{The second and third authors are supported by the Labex CEMPI (ANR-11-LABX-0007-01)}

\maketitle


\section{Introduction and results}
\label{sec:intro}%
\medskip

Let $G$ be a finite group, which we fix throughout the article.

In equivariant stable homotopy theory, a well-known result of Greenlees--May \cite[App.\,A]{GreenleesMayTate95} states that the stable homotopy category of rational $G$-spectra splits as a product of module categories
\begin{equation} \label{eq:intro-equiv}
\SH(G)_\mathbb Q \simeq \underset{\text{Cl}(H),\ H\leq G}{\prod}  \mathbb Q [W_G(H)]\text{-}\Modules^\mathbb Z ,
\end{equation}
one for each $G$-conjugacy class $\mathrm{Cl}(H)$ of subgroups $H\leq G$. Here $W_G(H)= N_G(H)/H$ denotes the Weyl group of $H$ in~$G$, and the corresponding factor is the semisimple abelian category of $\mathbb Z$-graded modules over the rational group algebra $\mathbb Q [W_G(H)]$ (the decoration $(\ldots)^\mathbb Z$ indicates graded objects).
This provides a canonical decomposition and an explicit algebraic model for rational equivariant cohomology theories and is thus of fundamental importance in equivariant topology. 

The equivalence is induced\footnote{The present description of the equivalence is not the one given in \cite{GreenleesMayTate95} in terms of idempotents elements of the Burnside ring, but can be easily shown to be equivalent; see \Cref{Rem:idempotents}.} by the composite functor 
\begin{equation} \label{eq:intro-composite}
\xymatrix{
\SH(G)
 \ar[r]^-{\underline{\pi}_*} & 
\Mackey(G)^\mathbb Z
 \ar[r]^-{\Br} &
{\underset{\text{Cl}(H),\ H\leq G}{\prod}  \mathbb Z [W_G(H)]\text{-}\Modules^\mathbb Z}
}
\end{equation}
which is already defined integrally. 
First one sends a $G$-spectrum $X$ to its equivariant homotopy groups, which form a ($\mathbb Z$-graded, abelian groups-valued) $G$-Mackey functor $\underline{\pi}_*(X)\in \Mackey(G)^\mathbb Z$.
Then one can form the so-called \emph{Brauer quotient} of a Mackey functor at every~$H$, which collectively form a functor~$\Br$ as above.

To see that \eqref{eq:intro-composite} induces an equivalence rationally, the harder part is to prove the same is true for~$\Br$.
Note that the latter is a purely algebraic statement which already holds when inverting the order of~$G$ and before taking graded objects; it is essentially already contained in \cite{Greenlees92} as well as in the independent work of Th\'evenaz--Webb~\cite{ThevenazWebb95}.
Once the latter is established, it immediately follows that rational Mackey functors form a semisimple category, and then some fairly standard homological algebra implies that $\underline{\pi}_*$ is also a rational equivalence.

\begin{center} $***$ \end{center}

The main result of this article is a generalization of the algebraic side of the story to general Green functors \cite{Bouc97}. 
If $R$ is any Green functor for~$G$, one observes (\Cref{Rems:Psi}) that the construction of Brauer quotients defines a functor
	\begin{equation} \label{eq:intro-new-Brauer-functor}
	\Br^{R} \colon R\MMackey(G) \longrightarrow \underset{\mathrm{Cl}(H),\ H\leq G}{\prod} \overline{R}(H) \rtimes_c  W_G(H)\MMod
	\end{equation}
from the category of Mackey $R$-modules (\ie Mackey functors equipped with a left $R$-action) to a product of module categories over certain skew group rings $\overline{R}(H)\rtimes_cW_G(H)$. Here $\overline{R}(H)$ is the Brauer quotient of $R$ at~$H$, and the skew group ring incorporates the residual conjugation action of $W_G(H)$ on it.
We prove:
\begin{ThmIntro}[{\cf \Cref{thm.RightAdjointMackMod}}]
\label{Thm:intro-splitting}
Let $G$ be a finite group, $R$ a Green functor for~$G$, and $Q\subseteq \mathbb Q$ a subring of the rationals containing~$|G|^{-1}$. 
Denote by $R_Q$ the Green functor obtained by tensoring $R$ with~$Q$.
Then \eqref{eq:intro-new-Brauer-functor} induces an equivalence 
\[
R_Q \MMackey(G) \simeq \underset{\mathrm{Cl}(H),\ H\leq G}{\prod} Q\otimes_\mathbb Z \overline{R}(H) \rtimes_c  W_G(H)\MMod.
\]
\end{ThmIntro}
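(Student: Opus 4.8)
The plan is to realise the functor of \eqref{eq:intro-new-Brauer-functor} for the Green functor $R_Q$, call it $\Br^{R_Q}$, as one half of an adjoint equivalence. First I would bring in the right adjoint studied in the body (see \Cref{thm.RightAdjointMackMod} and \Cref{Rems:Psi}): for each $H\leq G$ the component $\Br^{R_Q}_H\colon R_Q\MMackey(G)\to Q\otimes_{\mathbb{Z}}\overline{R}(H)\rtimes_c W_G(H)\MMod$ has a right adjoint $\Psi_H$ of "coinduced fixed points" type — a module over the skew group ring is inflated along $N_G(H)\onto W_G(H)$, fed into the fixed-point construction at $H$, coinduced up to $G$ at the level of Mackey functors, and finally turned into a Mackey $R_Q$-module along the unit Green-functor map $\underline{A}_Q\to R_Q$. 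Since $R_Q\MMackey(G)$ has products, the product functor $\Br^{R_Q}=(\Br^{R_Q}_H)_{[H]}$ then acquires the right adjoint $\Psi=\prod_{[H]}\Psi_H\circ\pr_{[H]}$, with unit $\eta\colon\id\Rightarrow\Psi\,\Br^{R_Q}$ and counit $\varepsilon\colon\Br^{R_Q}\,\Psi\Rightarrow\id$. It then remains to establish two things: (A) the counit $\varepsilon$ is invertible; and (B) $\Br^{R_Q}$ is exact and annihilates no nonzero object. Granting (A) and (B), the triangle identity $\varepsilon_{\Br^{R_Q}}\circ\Br^{R_Q}(\eta)=\id$ forces $\Br^{R_Q}(\eta)$ to be invertible, and a short diagram chase using exactness of $\Br^{R_Q}$ and the second half of (B) then shows $\eta$ itself is invertible; hence $\Br^{R_Q}$ is an equivalence.

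Part (B) is where the hypothesis $|G|^{-1}\in Q$ first intervenes, through the semisimplicity theorem of Greenlees and Th\'evenaz--Webb \cite{Greenlees92,ThevenazWebb95}: the category $\Mackey_Q(G)$ of $Q$-linear Mackey functors is semisimple, its simple objects $S_{H,V}$ ($V$ a simple $Q[W_G(H)]$-module) being detected by $\overline{S_{H,V}}(K)\cong V$ for $K\sim_G H$ and $0$ otherwise. The forgetful functor $U\colon R_Q\MMackey(G)\to\Mackey_Q(G)$ is exact and conservative; every additive functor out of the semisimple category $\Mackey_Q(G)$ — in particular each Brauer quotient $\overline{(-)}(H)$ — is exact; and the underlying $Q$-module of $\Br^{R_Q}_H(M)$ is $\overline{UM}(H)$. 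Hence $\Br^{R_Q}$ is exact, and if $\Br^{R_Q}_H(M)=0$ for all $H\leq G$ then $\overline{UM}(H)=0$ for all $H$, whence $UM=0$ and so $M=0$.

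Part (A) is the technical core, and where I expect the real work to lie. One must compute the Brauer quotients $\Br^{R_Q}_K\Psi_H(V)$, for $V$ a $Q\otimes_{\mathbb{Z}}\overline{R}(H)\rtimes_c W_G(H)$-module, via the double-coset formula for the Brauer quotient of a coinduced Mackey functor. Off the diagonal one expects $\Br^{R_Q}_K\Psi_H(V)=0$ for $[K]\neq[H]$: when $K$ is not subconjugate to $H$ every contributing summand factors through a proper transfer and vanishes, while when $K<_G H$ the contributing transfers carry factors equal to indices $[H:J]$ of proper subgroups $J<H$, invertible in $Q$ — the second, essential use of $|G|^{-1}\in Q$. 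On the diagonal one expects a natural isomorphism $\Br^{R_Q}_H\Psi_H(V)\cong V$, which should fall out of the adjunction defining $\Psi_H$ together with the computation of the Brauer quotient at $H$ of a coinduced fixed-point Mackey functor; the delicate point is to verify that the residual $\overline{R}(H)\rtimes_c W_G(H)$-action recovered by $\Br^{R_Q}_H$ agrees with the given one on $V$. I expect the main obstacle to be precisely this bookkeeping of skew-group-ring module structures along the various (co)inductions and inflations, together with checking that $\Psi_H$ is genuinely adjoint to $\Br^{R_Q}_H$ at the level of Mackey $R_Q$-modules and not merely of underlying Mackey functors; once that is under control, the remaining computations reduce to the classical Mackey-functor case of \cite{ThevenazWebb95}. (An alternative route, bypassing the explicit right adjoint, is to transport the monoidal form of the Th\'evenaz--Webb equivalence $\Mackey_Q(G)\simeq\prod_{[H]}(Q[W_G(H)]\MMod,\otimes_Q)$ along the Green functor $R_Q$: its image in the $H$-th factor is the $W_G(H)$-algebra $Q\otimes_{\mathbb{Z}}\overline{R}(H)$, and modules over a $\Gamma$-algebra $A$ internal to $\Gamma$-representations are exactly modules over the skew group ring $A\rtimes_c\Gamma$; there the content shifts to showing the Brauer quotient is monoidal and that the resulting equivalence coincides with $\Br^{R_Q}$.)
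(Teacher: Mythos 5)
Your overall strategy---exhibit $\Br$ as a left adjoint and show the adjunction is an equivalence---matches the paper, but the route through the adjunction data is genuinely different, and worth contrasting. The paper proves that the \emph{unit} $\eta$ becomes invertible on $Q$-local objects, by identifying its $G$-component with Th\'evenaz's Brauer homomorphism $M(G)\to\prod_{\mathrm{Cl}(H)}\overline{M}(H)^{W_G(H)}$ and invoking the fact that this is bijective after inverting $|G|$; this gives full faithfulness. Essential surjectivity is then handled by an idempotent-splitting trick that only uses the \emph{component-wise} counits $\varepsilon_{H,V}\colon\Br_H\Phi_H(V)\xra{\sim}V$, which are isomorphisms \emph{integrally}---the paper never needs to prove that the full counit $\Br\Phi\to\Id$ is invertible, i.e.\ it sidesteps the off-diagonal vanishing $\Br_K\Phi_H(V)=0$ for $K\not\sim_G H$ entirely. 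You instead propose to establish precisely that off-diagonal vanishing (your step~(A)), combine it with exactness and conservativity of $\Br$ (your step~(B), which rests on the Th\'evenaz--Webb semisimplicity of $\Mackey_Q(G)$), and deduce the equivalence formally. That is a valid logical scheme, but it front-loads the hardest computation, which the paper deliberately avoids; so where the paper uses $|G|^{-1}$ once (Th\'evenaz's Brauer homomorphism), your plan invokes it twice (semisimplicity for (B), index inversion for (A)), and the second use remains to be carried out.

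Two more specific comments. First, your case analysis in (A) has the subconjugacy relations reversed: since $\Psi_H(V)(K)=\map^{W_G(H)}\big((G/K)^H,V\big)$ and $(G/K)^H\neq\emptyset$ exactly when $H$ is subconjugate to $K$, the ``free'' vanishing (needing no hypothesis on $Q$) occurs when $H$ is \emph{not} subconjugate to $K$---in particular whenever $K<_G H$---while the case genuinely requiring inverted indices is $H$ \emph{properly} subconjugate to $K$, not the other way around as you wrote. Second, your description of $\Psi_H$ as inflation, fixed points, coinduction, and a final change-of-rings along the unit map $B_Q\to R_Q$ is vaguer than what you need, and is not the paper's construction: the paper takes the formula $\Phi_H(V)(X)=\map^{W_G(H)}(X^H,V)$ directly from Bouc's Lemma~11.6.1, which already comes equipped with the correct Mackey $R$-module structure and with the verification that it is right adjoint to $\Br_H$ \emph{at the module level}. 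That lemma is exactly the ``bookkeeping of skew-group-ring module structures along the various (co)inductions and inflations'' that you correctly anticipate as the obstacle; citing it is how the paper makes the adjunction cheap. Your alternative route via the monoidal Th\'evenaz--Webb equivalence is also sensible, but it trades the adjunction bookkeeping for proving that the Brauer quotient is monoidal, which in the paper is a separate theorem (\Cref{Thm:Brauer_tensor}) and not an input to \Cref{thm.RightAdjointMackMod}.
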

By taking $R$ to be the Burnside ring Green functor, one recovers as a special case the Greenlees--May\,/ Th\'evenaz--Webb rational splitting for plain Mackey functors.

\begin{center} $***$ \end{center}

It is known that the Greenlees-May splitting \eqref{eq:intro-equiv} is an equivalence of tensor categories, with respect to the smash product of rational $G$-spectra on the left-hand side and the usual product of group representations on the right-hand side; see \eg\ \cite{BarnesKedziorek22}
(it can also be deduced from a zig-zag of symmetric monoidal Quillen equivalences \cite{Barnes09} \cite{Kedziorek17}).
On the algebraic side, this is essentially contained in~\cite[\S8-10]{Bouc97}; \cf \Cref{sec:tensor}.

More generally, the category $R\MMackey(G)$ becomes symmetric monoidal for any commutative Green functor~$R$. 
Moreover, each Brauer quotient $\overline{R}(H)$ is a commutative ring, hence we may tensor two $\overline{R}(H)\rtimes_cW_G(H)$-modules over it and equip the result with the diagonal group action. 
We establish that the Brauer quotient is compatible with these tensor structures in full generality:

\begin{ThmIntro}[{\cf \Cref{Thm:Brauer_tensor}}]
\label{Thm:intro-tensor}
For any commutative Green functor $R$ for~$G$, the functor \eqref{eq:intro-new-Brauer-functor} naturally inherits a strong symmetric monoidal structure.
\end{ThmIntro}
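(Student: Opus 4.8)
The plan is to upgrade the equivalence of \Cref{Thm:intro-splitting} (or rather its underlying functor \eqref{eq:intro-new-Brauer-functor}, before inverting~$|G|$) to a statement compatible with tensor products, by checking that each ingredient of the construction of $\Br^R$ is strong symmetric monoidal. First I would recall precisely how $R\MMackey(G)$ is made symmetric monoidal for a commutative Green functor~$R$: the monoidal structure is the box product of Mackey functors localised at $R$, i.e.\ $M \otimes_R N$ is the coequaliser of $M\otimes N \otimes R \rightrightarrows M \otimes N$ (two $R$-actions) in the symmetric monoidal category $(\Mackey(G),\otimes,R)$ of Mackey functors with the box product having unit~$R$; this is exactly the setup of \cite[\S8-10]{Bouc97} to which the excerpt refers. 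On the target side, the factor at~$H$ is $\overline{R}(H)\rtimes_c W_G(H)\MMod$; since $\overline{R}(H)$ is a commutative ring carrying a $W_G(H)$-action by ring automorphisms, the category of modules over the skew group ring is symmetric monoidal via $U\otimes_{\overline{R}(H)}V$ with diagonal $W_G(H)$-action and unit~$\overline{R}(H)$ with its tautological action — one checks the diagonal action is well defined on the tensor product over $\overline{R}(H)$ precisely because $W_G(H)$ acts compatibly on the ring and on both modules. So the statement to prove is that $\Br^R$ carries the box-product-over-$R$ structure to the pointwise tensor-over-$\overline{R}(H)$ structure, with coherent unit and associativity constraints.

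The key steps, in order, are: (1) Recall that the Brauer quotient of a Mackey functor $M$ at~$H$ is $\overline{M}(H) = M(H)\big/\sum_{K<H}\mathrm{tr}^H_K M(K)$, and that it is lax symmetric monoidal for the box product of Mackey functors — the pairing $\overline{M}(H)\otimes\overline{N}(H)\to \overline{(M\otimes N)}(H)$ is induced by the component at~$H$ of the box product together with the diagonal $H\to H\times H$, and it annihilates transfers from proper subgroups on either side. This already equips $\overline{R}(H)$ with its commutative ring structure and makes $\overline{M}(H)$ an $\overline{R}(H)$-module whenever $M$ is an $R$-module; moreover the residual $W_G(H)$-conjugation action is by $\overline{R}(H)$-semilinear maps, so $\overline{M}(H)$ is genuinely a module over $\overline{R}(H)\rtimes_c W_G(H)$, which is how $\Br^R$ was defined in \Cref{eq:intro-new-Brauer-functor} (\Cref{Rems:Psi}). (2) Prove that this lax structure is in fact \emph{strong} on the subcategory of $R$-modules, i.e.\ that the natural map
\[
\overline{M}(H)\otimes_{\overline{R}(H)}\overline{N}(H) \;\longrightarrow\; \overline{M\otimes_R N}(H)
\]
is an isomorphism for all Mackey $R$-modules $M,N$. (3) Check the evident compatibilities: the unit $R$ of $R\MMackey(G)$ goes to the unit $\overline{R}(H)$; the associativity and symmetry constraints of the box product are sent to those of $\otimes_{\overline{R}(H)}$; and all of this is $W_G(H)$-equivariant, so the resulting functor to $\overline{R}(H)\rtimes_c W_G(H)\MMod$ is strong symmetric monoidal. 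Assembling over all conjugacy classes of~$H$ gives the product functor \eqref{eq:intro-new-Brauer-functor} its strong symmetric monoidal structure, as claimed.

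The main obstacle is step (2): showing the comparison map is an isomorphism. Lax monoidality (the existence of the map) is essentially formal from the universal properties of Brauer quotients and box products, but strongness is a genuine computation. The cleanest route I would take is to reduce to the analogous statement for \emph{fixed-point} (geometric) quotients and then quotient by transfers: recall that $\overline{M}(H)$ may be computed by first restricting $M$ to $N_G(H)$, then taking the Brauer construction, and that the box product localises appropriately; alternatively one can use that $\Mackey(G)$ is generated under colimits by representable Mackey functors $A_{G/K}$, reduce the claim by a colimit argument to $M = R\otimes A_{G/K}$ and $N = R\otimes A_{G/L}$ (the free $R$-modules), and then compute both sides explicitly using the double-coset formula, comparing $\overline{(R\otimes A_{G/K})}(H)$ with the known description of Brauer quotients of free modules. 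One must be a little careful that $\otimes_R$ is only right exact, so the colimit reduction needs the Brauer quotient and the relevant box products to commute with the reflexive coequalisers defining $\otimes_R$ — but $\overline{(-)}(H)$ is a quotient functor and hence right exact, and the box product of Mackey functors preserves colimits in each variable, so this goes through. A reader-facing alternative, which I would at least mention, is that for commutative~$R$ this is already implicit in \cite[\S8-10]{Bouc97} where the behaviour of Brauer quotients on the tensor product of Green functors and their modules is analysed; our contribution is to phrase it as the strong monoidal structure on \eqref{eq:intro-new-Brauer-functor} and to record the $W_G(H)$-equivariance explicitly. Either way, once step (2) is in hand, steps (1) and (3) are routine coherence checks of the kind that, by Mac Lane's coherence theorem combined with the universal properties involved, reduce to verifying a handful of commuting diagrams on generators.
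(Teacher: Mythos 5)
Your outline is correct and its first and third steps (the lax structure map and the coherence/equivariance checks) match what the paper does: the pairing $\overline{M}(H)\otimes\overline{N}(H)\to\overline{(M\otimes_R N)}(H)$ is induced by the inclusion of the summand indexed by $\theta=\id_{G/H}$ in the definition of the box product, it kills transfers from proper subgroups on either side by the Frobenius-type relations \eqref{eq:rel1and2}, and the $\overline{R}(H)$-balancing comes from \eqref{eq:rel3}, so that the whole question reduces to your step (2), the bijectivity of the comparison map. There, however, you take a genuinely different route. The paper first observes that it suffices to treat the Burnside case $R=B$ (quotienting by \eqref{eq:rel3} on both sides then handles general commutative $R$), explicitly notes that a direct verification from the relations ``appears to be difficult'', and instead factors $\Br_H$ through Bouc's biset formalism: $\Br_H\cong \ev_1\circ\mathcal L_U$ for the biset $U={}_{W_G(H)}(H\backslash G)_G$, where $\mathcal L_U$ is strong monoidal by \cite[Prop.\,10.1.2]{Bouc97} because the right $G$-action on $U$ is transitive, and evaluation at the terminal $W_G(H)$-set is monoidal since $1$ is terminal. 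Your alternative -- right-exactness of both sides in each variable plus reduction to the projective generators $R_{G/K}=R\otimes A_{G/K}$ and an explicit double-coset computation of $\overline{R_{G/K}}(H)\cong\overline{R}(H)[(G/K)^H]$ -- is sound in principle and more self-contained (it avoids importing the $\mathcal L_U$ machinery), but it is precisely the computational work the paper chose to avoid, and you have only sketched it; to make it a complete proof you would need to verify the identification of $\overline{R_{G/K}}(H)$ as a $W_G(H)$-module and check that the comparison map really is the resulting canonical isomorphism. The paper's route buys an integral, computation-free argument at the cost of invoking \cite[Ch.\,8--10]{Bouc97}; yours buys elementarity at the cost of a double-coset bookkeeping exercise that still has to be written out.
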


\begin{center} $***$ \end{center}

Finally, we illustrate the usefulness of the above results by applying them to the $G$-equivariant Kasparov category $\KK^G$ of separable complex $G$-C*-algebras \cite{Kasparov88}. 
More precisely we consider  (analogously to the topological situation) the full tensor-triang\-ulat\-ed subcategory $\Cell{G}$ of \emph{$G$-cell algebras}, \ie the localizing subcategory generated by the complex function algebras $\Cont(G/H)$, as well as its rationalization $\Cell{G}_\mathbb Q$. We obtain the following splitting theorem:

\begin{ThmIntro} [{\cf \Cref{sec:Module-Mackey-Gcell}}]
\label{Thm:rational_cell_vs_Mack}
For any finite group $G$, taking equivariant K-theory and Brauer quotients induces an equivalence
		\[ 
		\Cell{G}_{\mathbb{Q}}\simeq {\prod_{\mathrm{Cl}(H),\, H\leq G\; \textrm{cyclic}}}  \mathbb{Q}(\zeta_{|H|}) \rtimes_c W_G(H)\MMod^{\mathbb Z/2}_{\aleph_1} 
		\]
of symmetric monoidal semisimple abelian categories. 
For the right-hand side, we choose a cyclic subgroup $H$ in each $G$-conjugacy class and a generator $\langle g\rangle = H$; then the skew group ring at $H$ has coefficients in the cyclic field extension $\mathbb Q(\zeta_{|H|})$, on which $W_G(H)$ acts  by 
\[ 
c_w (\zeta_{|H|})= \zeta_{|H|}^{m_H(w)} \quad  (w\in W_G(H))
\] 
for the unique $m_H(w)\in (\mathbb Z/|H|)^\times$ such that $g^{m(w)}=w^{-1}gw$.
\end{ThmIntro}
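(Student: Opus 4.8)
The plan is to deduce Theorem~\ref{Thm:rational_cell_vs_Mack} from Theorems~\ref{Thm:intro-splitting} and~\ref{Thm:intro-tensor} applied to the complex representation ring Green functor $R = \Rep_{\mathbb C}$, after first matching the $\mathbb Z/2$-graded Kasparov side with $\mathbb Z/2$-graded Mackey modules. First I would set up the equivariant K-theory functor $\Kth^G_* \colon \KK^G \to \Rep_{\mathbb C}\MMackey(G)^{\mathbb Z/2}$ sending a $G$-C*-algebra $A$ to the collection $(\Kth^G_*(\Cont(G/H)\otimes A))_H$ of its $H$-equivariant K-theory groups, which form a $\mathbb Z/2$-graded $G$-Mackey functor carrying a module structure over $\underline{R}_{\mathbb C} := (H \mapsto \Rep_{\mathbb C}(H))$ via the external product in $\KK$. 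The key input is that $\Kth^G_*$ restricted to $\Cell{G}$ is essentially surjective and, after rationalization, an equivalence onto $(\Rep_{\mathbb C})_{\mathbb Q}\MMackey(G)^{\mathbb Z/2}$; this is the standard ``cellular = module over homotopy'' argument (a homological algebra of the kind invoked in the Introduction for $\underline\pi_*$), which works because the generators $\Cont(G/H)$ have, rationally, no higher self- or cross-extensions once the target category is semisimple. I would state this as a lemma, citing the analogous topological statement and the Kasparov-theoretic analogues of the universal coefficient / Künneth formulas for $\Cont(G/H)$, and spend most of the write-up justifying the semisimplicity and the vanishing of the relevant $\KK$-theoretic $\Ext$'s.

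Next I would identify the right-hand side. By Theorem~\ref{Thm:intro-splitting} with $Q = \mathbb Q$ (which contains $|G|^{-1}$), we have
\[
(\Rep_{\mathbb C})_{\mathbb Q}\MMackey(G)^{\mathbb Z/2} \simeq \prod_{\mathrm{Cl}(H),\ H\leq G} \mathbb Q \otimes_{\mathbb Z} \overline{\Rep_{\mathbb C}}(H) \rtimes_c W_G(H)\MMod^{\mathbb Z/2},
\]
so it remains to compute the Brauer quotient $\overline{\Rep_{\mathbb C}}(H)$ of the complex representation ring Green functor and to see that only cyclic $H$ contribute. The Brauer quotient of $\Rep_{\mathbb C}$ at $H$ is, by definition, $\Rep_{\mathbb C}(H)$ modulo the images of the transfers (inductions) from all proper subgroups $K < H$; by Artin's induction theorem, rationally every character of $H$ is a $\mathbb Q$-combination of characters induced from cyclic subgroups, so $\mathbb Q \otimes \overline{\Rep_{\mathbb C}}(H) = 0$ unless $H$ is cyclic. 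For $H = \langle g\rangle$ cyclic of order $n$, I would compute $\mathbb Q \otimes \overline{\Rep_{\mathbb C}}(H)$: the rational representation ring $\mathbb Q\otimes \Rep_{\mathbb C}(C_n) \cong \prod_{d\mid n} \mathbb Q(\zeta_d)$ (indexed by the orbits of primitive $d$-th roots, i.e.\ by characters up to Galois), and the induction ideal from proper subgroups kills exactly the factors with $d < n$, leaving the single ``faithful'' factor $\mathbb Q(\zeta_n)$. Then I would check that the residual conjugation action of $W_G(H) = N_G(H)/H$ on this factor is the Galois-type action $\zeta_n \mapsto \zeta_n^{m(w)}$ determined by $g^{m(w)} = w^{-1}gw$, which is immediate from how $N_G(H)$ acts on characters of $H$. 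This yields the factor $\mathbb Q(\zeta_{|H|}) \rtimes_c W_G(H)\MMod^{\mathbb Z/2}$ for each cyclic class, and restricting to $\aleph_1$-generated modules accounts for the countability constraint coming from separable C*-algebras (objects of $\KK^G$ have countable K-theory).

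Finally, the symmetric monoidal and semisimplicity assertions: semisimplicity of the right-hand side is clear since each $\mathbb Q(\zeta_{|H|}) \rtimes_c W_G(H)$ is a finite-dimensional semisimple $\mathbb Q$-algebra (a crossed product of a field with a finite group, in characteristic zero, is semisimple by Maschke-type arguments), and hence so is $(\Rep_{\mathbb C})_{\mathbb Q}\MMackey(G)^{\mathbb Z/2}$; the monoidal statement follows from Theorem~\ref{Thm:intro-tensor} applied to the commutative Green functor $\Rep_{\mathbb C}$, together with the fact that equivariant K-theory $\Cell{G}_{\mathbb Q} \to (\Rep_{\mathbb C})_{\mathbb Q}\MMackey(G)^{\mathbb Z/2}$ is strong symmetric monoidal for the external Kasparov product (this is where one uses the multiplicativity of $\Kth^G_*$ and the Künneth isomorphism on the generators). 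I expect the main obstacle to be the first lemma — proving that rationalized equivariant K-theory is an equivalence on $G$-cell algebras — since it requires assembling the $\KK$-theoretic homological algebra (a spectral sequence or Postnikov/phantom-type argument over $\Rep_{\mathbb C}\MMackey(G)$) and verifying that the relevant higher $\KK$-groups between the $\Cont(G/H)$ vanish rationally; the representation-ring computations and the crossed-product semisimplicity are comparatively routine. A secondary subtlety is bookkeeping the $\mathbb Z/2$-grading and the $\aleph_1$-generation condition consistently through the equivalence of Theorem~\ref{Thm:intro-splitting}.
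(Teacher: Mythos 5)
Your proposal follows essentially the same route as the paper: both pass through equivariant K-theory (viewed as a Mackey module over the Green functor $\Rep^G$ via \cite{DellAmbrogio14}), apply Theorems~\ref{Thm:intro-splitting} and~\ref{Thm:intro-tensor}, compute the Brauer quotients of the rationalized representation ring, and invoke a Maschke-type result for semisimplicity. The only differences are in how the details are discharged: the paper cites specific universality and UCT spectral-sequence results from \cite{DellAmbrogio14} for the step that $k^G_\mathbb{Q}$ is a rational equivalence, and cites \cite{Thevenaz88} and \cite{BHIKM22} for the Brauer-quotient computation that you sketch directly via Artin induction.
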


Here we grade our modules over $\mathbb Z/2$, because of Bott periodicity, and the decoration $\aleph_1$ says that we only need consider countable modules (as $\KK^G$ only admits countable coproducts).  
Besides these details, the proof can be run analogously as for~\eqref{eq:intro-equiv}. 
Homotopy groups are replaced by equivariant topological K-theory of C*-algebras, which takes values in Mackey modules over the complex representation ring Green functor~$\Rep^G$ by results of \cite{DellAmbrogio14}. 
The result follows from Theorems~\ref{Thm:intro-splitting} and~\Cref{Thm:intro-tensor} for $R=\Rep^G$ and $Q=\mathbb Q$, together with the known calculation of the Brauer quotients of~$\Rep^G_\mathbb Q$.

In particular \Cref{Thm:rational_cell_vs_Mack} provides an explicit algebraic classification, up to rational equivariant KK-equivalence, of all separable $G$-C*-algebras which are $G$-cell algebras. We recall that the class of $G$-cell algebras is closed under several bootstrap operations and contains many of the separable $G$-C*-algebras of interest (though, for general~$G$, not all of them; \cf \cite[Rem.\,2.4]{DellAmbrogio14} \cite[\S3.1]{DEM14}).

\begin{Rem} \label{Rem:intro-coefficients}
Theorems~\ref{Thm:intro-splitting} and~\ref{Thm:intro-tensor}  can be easily generalized to Mackey and Green functors with coefficients in any commutative ring~$\mathbb K$, in which case the hypothesis reads ``suppose that $|G|$ is invertible in~$\mathbb K$'' and the resulting equivalence is of $\mathbb K$-linear (tensor) categories.
Note that if $R$ is defined over $\mathbb K$ and $|G|^{-1}\in \mathbb K$, then $R_Q = R$ and similarly $Q\otimes_\mathbb Z \overline{R}(H) \rtimes_c  W_G(H) =  \overline{R}(H) \rtimes_c  W_G(H)$ for all~$H$. (See also Remarks~\ref{Rem:scalar_ext} and~\ref{Rem:graded-cble-variants}.)
\end{Rem}

\begin{Rem}
Using the Universal Coefficients and K\"unneth spectral sequences established in~\cite{LewisMandell06}, our Theorems~\ref{Thm:intro-splitting} and~\ref{Thm:intro-tensor} (or rather their evident $\mathbb Z$-graded versions) can be used to generalize the Greenlees--May equivalence~\eqref{eq:intro-equiv} to the rational stable homotopy category of modules over a (commutative) $G$-equivariant $S$-algebra~$A$, whenever its rationalized homotopy groups Green functor $\mathbb Q\otimes_\mathbb Z(\underline{\pi}_*A)$ has semisimple Brauer quotients at all subgroups of~$G$.

In fact, we expect the ``topological'' half of the proof of~\eqref{eq:intro-equiv} to also admit an even more comprehensive generalization, in terms of the Green 2-functors of~\cite{DellAmbrogio22}. 
This would allow the systematic application of Theorems~\ref{Thm:intro-splitting} and \ref{Thm:intro-tensor} to other examples of ``equivariant mathematics'' beyond stable homotopy and KK-theory. 
However, such an axiomatic setup would be rather heavy for this purpose and is better reserved to a general study of equivariant homological algebra not limited to the rational case.
\end{Rem}

\begin{Cont*}
After brief recollections on  skew group rings (\Cref{sec:skew-group-rings}) and Mackey functors (\Cref{sec:Mackey}) to fix notation, we describe the functor $\Br^R$ and prove \Cref{Thm:intro-splitting} (\Cref{sec:GreenlessMay-Module-Mackey}), prove \Cref{Thm:intro-tensor} (\Cref{sec:tensor}), and prove \Cref{Thm:rational_cell_vs_Mack} (\Cref{sec:Module-Mackey-Gcell}).
\end{Cont*}

\section{Skew group rings}
\label{sec:skew-group-rings}%
\medskip

Suppose a finite group $W$ acts on a ring $S$ by ring automorphisms, say via the group homomorphism $\alpha\colon W\to \Aut(S)$, $w\mapsto \alpha_w$.
The \emph{skew group ring} for this action, denoted 
\[
S \rtimes_\alpha W,
\]
is defined to be the free $S$-module $\bigoplus_{w\in W} Sw$ on the set~$W$, equipped with the multiplication defined by additively extending the rule
\[
(sw) (s'w') :=(s \alpha_w(s')) (ww')
\]
in both variables.
Note that $S \rtimes_\alpha W$ is a unital and associative ring but \emph{not} an $S$-algebra, indeed its multiplication is not $S$-bilinear.

\begin{Rem} \label{Rem:scalar_ext}
If $\mathbb K$ is any commutative ring, there is a unique extension of $c$ to a $W$-action $\tilde c$ on the $\mathbb K$-algebra $\mathbb K\otimes_\mathbb ZS$, and there is an evident isomorphism 
\[
(\mathbb K\otimes_\mathbb Z S)\rtimes_{\tilde c} W \cong \mathbb K \otimes_\mathbb Z (S \rtimes_c W)
\]
of $\mathbb K$-algebras. Accordingly, we may safely omit parentheses when extending scalars in a group ring, \eg\ as in the statement of our main theorems with $\mathbb K = Q$.
\end{Rem}

\begin{Rem}
\label{Rem:skew-modules}
The abelian category
\(
S\rtimes_\alpha W \MMod
\)
of left modules over the ring $S\rtimes_\alpha W$ admits the following alternative description. 
An object $V=(V, \varphi)$ is determined by an $S$-module $V$ equipped with a $W$-action $\varphi$ on the underlying abelian group of~$V$, which we will write as a set of abelian group automorphisms
\[
\varphi_w \colon V \overset{\sim}{\to} V \quad (w\in W)
\] 
satisfying $\varphi_w\varphi_{w'} = \varphi_{ww'}$ and $\varphi_e = \id_V$.
Moreover, the $S$-action and the  $W$-action are compatible in the sense that
\begin{equation} \label{eq:covariance}
\varphi_w(s \cdot v) = \alpha_w(s) \cdot \varphi_w(v)
\end{equation}
for all $w\in W$, $s\in S$ and $v\in V$. 
These two actions are obtained by restricting the given $S\rtimes_\alpha W$-action along the inclusions $S\to S\rtimes_\alpha W$, $s \mapsto se$, and $W\to S\rtimes_\alpha W$, $w\mapsto 1_S w$. 
Conversely, two such actions satisfying the above compatibility extend to a unique left $S\rtimes_\alpha W$-action on the abelian group~$V$. A morphism $\xi \colon (V, \varphi) \to (V'\varphi')$ of left modules is the same as an $S$-linear map $V\to V'$ which is equivariant for the group actions: $\varphi'_w \circ \xi = \xi \circ \varphi_w$ for all $w\in W$.
\end{Rem}

\begin{Exa} \label{Exa:non-skew}
If the action $\alpha$ is trivial, $S\rtimes_\alpha W$ is the usual group algebra~$S[W]$.
\end{Exa}

The classical Maschke theorem for group algebras also holds for skew group rings:

\begin{Lem}[{\cite[Corollary 0.2.(1)]{Montgomery80}}]
\label{Lem:semisimple}
If $S$ is a semisimple Artinian ring (\eg\ a field) containing $|W|^{-1}$, then $S\rtimes_\alpha W$ is also a semisimple Artinian ring.
In particular, all objects in the abelian category $S\rtimes_\alpha W\MMod$ are projective.
\qed
\end{Lem}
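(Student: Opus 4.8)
The plan is to run the classical averaging argument behind Maschke's theorem, using the concrete description of $S\rtimes_\alpha W$-modules from \Cref{Rem:skew-modules}. The Artinian part is cheap: by construction $S\rtimes_\alpha W=\bigoplus_{w\in W}Sw$ is a free left $S$-module of rank $|W|$, and every left ideal of $S\rtimes_\alpha W$ is in particular an $S$-submodule (being stable under left multiplication by the copy $Se$ of $S$); since $S$ is left Artinian, a finitely generated $S$-module satisfies the descending chain condition on $S$-submodules, hence $S\rtimes_\alpha W$ satisfies it on left ideals. For semisimplicity it then suffices to show that every short exact sequence $0\to V'\to V\xrightarrow{\;\pi\;} V''\to 0$ in $S\rtimes_\alpha W\MMod$ splits, as this forces every left ideal to be a direct summand, whence $S\rtimes_\alpha W$ is semisimple Artinian by Wedderburn–Artin (and, in particular, all its modules are projective).

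To produce the splitting: since $S$ is semisimple, choose an $S$-linear section $\sigma\colon V''\to V$ of $\pi$. Writing the two $W$-actions as $\varphi$ on $V$ and $\varphi''$ on $V''$ (with $\varphi_w^{-1}=\varphi_{w^{-1}}$, etc.), set
\[
\bar\sigma \;:=\; \frac{1}{|W|}\sum_{w\in W}\varphi_w\circ\sigma\circ(\varphi''_w)^{-1}\colon\; V''\longrightarrow V
\]
(which makes sense because $|W|^{-1}$ is a central unit of $S$). One then checks three things: (i) $\bar\sigma$ is $S$-linear, verified term by term by pushing the scalar $s$ through $(\varphi''_w)^{-1}$, then through the $S$-linear $\sigma$, then through $\varphi_w$, using the covariance relation \eqref{eq:covariance} so that the twist $\alpha_w\alpha_w^{-1}=\id$ cancels; (ii) $\bar\sigma$ is $W$-equivariant, by reindexing the sum $w\mapsto w'w$; (iii) $\pi\bar\sigma=\id_{V''}$, because $\pi$ intertwines $\varphi$ and $\varphi''$ and $\pi\sigma=\id$, so the summands collapse to $\frac{1}{|W|}\sum_w\id=\id$. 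By (i)–(iii), $\bar\sigma$ is a section of $\pi$ in $S\rtimes_\alpha W\MMod$.

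The only genuinely delicate point is the bookkeeping in step (i)–(ii): one must be careful that the automorphisms $\alpha_w$ entering \eqref{eq:covariance} cancel correctly, so that the averaged map lands in the skew-group-ring module category rather than merely in $S\MMod$. This is precisely the computation that makes the classical proof work, with the trivial-action case of \Cref{Exa:non-skew} recovering ordinary Maschke. The final ``in particular'' clause is then immediate from semisimplicity (every module is a direct sum of simples, each a summand of the rank-one free module, hence projective), or directly from the splitting just constructed. Alternatively one may simply cite \cite[Corollary 0.2.(1)]{Montgomery80}.
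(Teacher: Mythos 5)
Your proof is correct. The paper itself offers no argument here: the lemma is stated with an immediate \qed and a citation to Montgomery's Corollary 0.2(1), so you have in effect supplied the proof of the cited result. Your argument is the standard Maschke averaging adapted to the skew setting, and all the delicate points check out: the twist cancels in step (i) because $\varphi_w\bigl(\alpha_{w^{-1}}(s)\,y\bigr)=\alpha_w(\alpha_{w^{-1}}(s))\,\varphi_w(y)=s\,\varphi_w(y)$ by \eqref{eq:covariance}, and the prefactor $|W|^{-1}$ causes no trouble since it lies in the (central, $\alpha$-invariant) prime subring of $S$. Two cosmetic remarks. First, your separate Artinian argument is fine but redundant: once every short exact sequence of left modules splits, the regular module is a sum of simples and $S\rtimes_\alpha W$ is semisimple Artinian by Wedderburn--Artin, as you note. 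Second, for step (i) you implicitly use that $\sigma$ commutes with the element $\alpha_{w^{-1}}(s)$ of $S$, which is exactly what $S$-linearity of $\sigma$ gives; it is worth phrasing it that way so the reader sees the twisted scalar, not $s$ itself, is what passes through $\sigma$. With those observations the proof stands as a self-contained replacement for the external citation.
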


\begin{Rem} \label{Rem:tensor-skew}
When the ring $S$ is commutative, $S\rtimes_\alpha W\MMod$ admits a symmetric monoidal structure where $(V,\varphi)\otimes(V',\varphi'):= (V\otimes_S V', \varphi\otimes \varphi')$ for the diagonal action $(\varphi\otimes \varphi' )_w = \varphi_w \otimes \varphi_w$ ($w\in W$), and where 
$f \otimes f' := f\otimes_S f'$ on morphisms. The associative, unit and symmetry isomorphisms are those of the underlying $S$-modules.
The tensor unit object $\unit$ is provided by $(S,\alpha)$. 
\end{Rem}

\section{Mackey modules over a Green functor}
\label{sec:Mackey}%
\medskip

A \emph{Mackey functor} $M$ for the finite group~$G$ consists of an abelian group $M(H)$ for every subgroup $H\leq G$ together with three families of group homomorphisms
\begin{align*}
\res^H_K\colon M(H)\to M(K),
\quad \ind^H_K \colon M(K)\to M(H),
\quad c_{g,H} \colon M(H) \to M({}^gH)
\end{align*}
for all subgroups $K\leq H \leq G$ and elements $g\in G$, 
 respectively called \emph{restriction}, \emph{induction} and \emph{conjugation} maps; these are subject to certain relations.
 (We use the standard notations ${}^gH=gHg^{-1}$ and $H^g = g^{-1}Hg$ for the conjugates of a subgroup $H\leq G$ by an element $g\in G$.) 
Mackey functors (for~$G$) form an abelian category $\Mackey:= \Mackey_\mathbb Z(G)$, where a morphism $f \colon M\to M'$ is a family of homomorphisms $f_H\colon M(H)\to M'(H)$ for all $H\leq G$, commuting with all restriction, induction and conjugation maps.

A \emph{Green functor} is a Mackey functor $R$ for which each $R(H)$ is an (associative and unital) ring, restriction and conjugation maps are ring maps, and which satisfies the two \emph{Frobenius relations}
\[
\ind_K^H(\res_K^H(r) \cdot r') = r\cdot \ind_K^H( r'), \quad
\ind_K^H(r \cdot \res_K^H(r') ) = \ind_K^H( r) \cdot r' .
\]
A (left) \emph{Mackey module} over the Green functor $R$ is a Mackey functor $M$ such that $M(H)$ is endowed with the structure of a (left) $R(H)$-module ($H\leq G$), whose structure maps satisfy the analogs of the above Frobenius relations, and whose restriction and conjugation maps commute with the ring actions in the evident way (see \cite[\S1.1 and \S2.1]{Bouc97} for all details on the axioms). 

Mackey $R$-modules form an abelian category 
\[
R\MMackey := R\MMackey_\mathbb Z(G)
\]
where a morphism $f\colon M\to M'$ is a morphism of the underlying Mackey functors such that each $f_H\colon M(H)\to M'(H)$ is an $R(H)$-linear map.

\begin{Exa}
The first example of a Green functor is the \emph{Burnside ring} Green functor, which in this article we denote by~$B$. 
It is commutative and acts uniquely on all Mackey functors, so that $B\MMackey = \Mackey$.
See \cite[\S2.4]{Bouc97}.
\end{Exa}

\begin{Rec} \label{Rec:Dress_pic}
Besides the above \emph{subgroups picture} of Mackey functors, the alternative \emph{$G$-set (or Dress) picture} is often very useful. 
According to the latter, a Mackey functor is a pair $M=(M^*,M_*)$ of functors $M^*\colon G\sset^\op\to \Ab$ and $M\colon G\sset\to \Ab$ (a ``bifunctor'') agreeing on objects, $M^*(X)=M_*(X)=:M(X)$ for all finite $G$-sets~$X$, and which send disjoint unions to direct sums and satisfy a base-change condition for pullbacks. 
A morphism $M\to M'$ is a transformation $\{f_X\colon M(X)\to M'(X)\}_X$ which is natural for both functorialities.
The equivalence of the two definitions takes a bifunctor $(M^*,M_*)$ and yields a Mackey functor for the subgroups picture by setting $M(H):= M(G/H)$ for all subgroups $H\leq G$ as well as $\res^H_K:=M^*(G/K\twoheadrightarrow G/H)$, $\ind^G_K:=M_*(G/K\twoheadrightarrow G/H)$, and $c_{g,H}:=M^*(G/{{}^gH}\overset{\sim}{\to} G/H)$ (here $G/K\twoheadrightarrow G/H$ is the quotient map and $G/{}^gH\overset{\sim}{\to} G/H$ is the isomorphism $\gamma {}^gH\to \gamma gH$).  
Conversely, a subgroups-picture Mackey functor extends essentially uniquely to a bifunctor $(M^*,M_*)$ via the same formulas, by decomposing $G$-sets into orbits. See \cite[\S1.1.2]{Bouc97}.
\end{Rec}

\begin{Rec}
\label{Rec:tensor-Mackey}
If the Green functor $R$ is commutative, $R\MMackey$ admits a symmetric monoidal structure. 
We briefly recall its definition from \cite[\S6.6]{Bouc97} in terms of the $G$-set picture (\Cref{Rec:Dress_pic}). 
Given two $R$-modules $M,N$, the value at $X\in G\sset$ of their tensor product $M\otimes_RN$ is 
\[
\big( M\otimes_R N\big)(X) := \left( \bigoplus_{\theta:Y\to X} M(Y) \otimes_\mathbb Z N(Y) \right) / \mathcal J,
\]
where the sum is taken over all maps $\theta$ of finite $G$-sets, and $\mathcal J$ is the subgroup generated by the following three sets of relations:
\begin{align}
 M_*(f)(m)\otimes n' - m \otimes &N^*(f)(n') \label{eq:rel1and2}, 
\quad  M^*(f)(m')\otimes n - m'\otimes M_*(f)(m'), \\
& (r\cdot m) \otimes n - m\otimes (r\cdot n) \label{eq:rel3}
\end{align}
for all $m\in M(Y)$, $n\in N(Y)$, $m'\in M(Y')$, $n'\in N(Y')$, $r\in R(Y)$, and all $G$-set maps $f\colon Y\to Y'$ with $\theta' f = \theta$. 
Here  $M^*$ and $M_*$ are the two functorialities of~$M$, extending the restriction \& conjugation maps and the induction maps of the subgroup picture, respectively.

If we only quotient out the relations \eqref{eq:rel1and2}, we get the value at $X$ of the tensor product $M\otimes_B N$ (denoted $M\hat\otimes N$ in~\cite{Bouc97}) of the underlying plain Mackey functors, \ie the special case when $R$ is the Burnside ring~$B$. 
The structure maps of the Mackey functor $M\otimes_R N$ are inherited via the quotient map $M\otimes_B N \to M\otimes_R N$ from those of $M\otimes_B N$, as described in \cite[Prop.\,1.6.2]{Bouc97}.

The tensor unit is $R$, and the unitality, associativity and symmetry isomorphisms of the tensor structure are inherited from those of $\otimes_\mathbb Z$ in the obvious way.
\end{Rec}

\section{Brauer quotients of Mackey modules}
\label{sec:GreenlessMay-Module-Mackey}%
\medskip
In order to precisely state \Cref{Thm:intro-splitting}, we still need to define Brauer quotients and the rationalization of a Green functor.
Slightly more generally:

\begin{Not} \label{Not:rationalization}
For any subring $Q\subseteq \mathbb Q$ and any Green functor~$R$, we write $R_Q$ for the $Q$-linearization of~$R$, that is the Green functor defined by $R_Q(H):=Q\otimes_\mathbb Z R(H)$ for all $H\leq G$, and similarly for the structural maps. 
The module category $R_Q\MMackey$ can be identified with the full subcategory of $R\MMackey$ containing all \emph{$Q$-local} (or \emph{$Q$-linear}) Mackey modules, \ie those taking values in $Q$-modules.
(If necessary, recall that every subring $Q\subseteq \mathbb Q$ is a localization of~$\mathbb Z$.)
The \emph{rationalization} of $R$ is the Green functor $R_\mathbb Q$, the case when $Q= \mathbb Q$.
\end{Not}

\begin{Def}[{\cite[\S54]{Thevenaz95}}]  \label{Def:Brauer-quot}
For any Mackey functor~$M$ for $G$ and subgroup $H\leq G$, the \emph{Brauer quotient} of $M$ at $H$ is the quotient of abelian groups 
\[
\overline{M}(H) := M(H) / \left( \sum_{H' \lneq H} \ind_{H'}^H (M(H'))\right)
\] 
obtained by killing all elements induced from proper subgroups of~$H$.
\end{Def}

\begin{Rems}  \label{Rems:Psi}
Let $M$ be a Mackey functor and $H\leq G$ a subgroup. In a series of successive easy observations, we will turn the Brauer quotient construction into a suitable functor~$\Br_H$. 
\begin{enumerate} 
\item 
For $g\in N_G(H)$, the conjugation map $c_g:=c_{g,H}\colon M(H)\to M({}^gH)$ is an automorphism of~$M(H)=M({}^gH)$. 
By an axiom of Mackey functors, $c_g = \id$ whenever $g\in H$. 
We thus obtain an action of $W_G(H)$ on $M(H)$. 
By another axiom of Mackey functors, namely the commutation relations 
\[ c_g \ind^H_{H'} = \ind^{{}^gH}_{{}^gH'} c_g  \quad (\textrm{for } g\in G, H'\leq H\leq G),
\]
this $W_G(H)$-action descends on the Brauer quotient $\overline{M}(H)$. 
We call this action (both on $M(H)$ and $\overline{M}(H)$) the \emph{conjugation action} and still denote it by~$c$.
 \item 
 For a Green functor $R$, the subgroup $\sum_{H'<H}\ind_{H'}^{H}(R(H))\subseteq R(H)$ is an ideal thanks to the (left) Frobenius relation. 
 Therefore $\overline{R}(H)$ is a ring.
\item 
By definition, if $M= R$ is a Green functor the conjugation action on $R(H)$ is by ring automorphisms. This descends to $\overline{R}(H)$ by part~(b), hence we may construct the skew group ring  
 \[ 
 \overline{R}(H)\rtimes_c W_G(H) 
 \]
 as in~\S\ref{sec:skew-group-rings}.
 \item 
 If $M$ is a Mackey module over the Green functor~$R$, the Brauer quotient $\overline{M}(H)$ becomes a left $\overline{R}(H)$-module. 
 This follows similarly to (b) and using the left Frobenius axiom for Mackey modules.
\item 
Altogether, when $M$ is a Mackey module over~$R$, the conjugation action of~(a) and the $\overline{R}(H)$-action of~(d) turn $\overline{M}(H)$ into a left $\overline{R}(H)\rtimes_c W_G(H)$-module. 
Indeed, this follows immediately from the Mackey module axiom
\[
c_g(r \cdot m) = c_g(r) \cdot c_g(m)
\]
(for $g\in G$, $r\in R(H)$, $m\in M(H)$) together with \Cref{Rem:skew-modules}.
\item Suppose $f\colon M\to M'$ is a morphism of Mackey $R$-modules. 
Since the components of $f$ commute with the induction maps  of $M$ and~$M'$, $f_H$ descends to a homomorphism $\overline{f_H}\colon \overline{M}(H)\to \overline{M'}(H)$, which is $\overline{R}(H)$-linear because $f_H$ is $R(H)$-linear by definition. 
Since the components of $f$ commute with the conjugation maps, $\overline{f_H}$ commutes with the conjugation actions. In other words, $\overline{f_H}$ is a morphism of $\overline{R}(H)\rtimes_c W_G(H)$-modules.

\end{enumerate}
\end{Rems}

\begin{Def} \label{Def:Psi_H}
Write
\[
\Br_H \colon R\MMackey \longrightarrow  \overline{R}(H)\rtimes_c W_G(H) \MMod
\]
for the functor  resulting from~\Cref{Rems:Psi}, defined by $M\mapsto \Br_H(M):= (\overline{M}(H), c)$ on objects and $f\mapsto \Br_H(f):=\overline{f_H}$ on morphisms. We can now state:
\end{Def}

\begin{Thm}\label{thm.RightAdjointMackMod}
	For any finite group~$G$ and any Green functor $R$ for~$G$, the functor
	\[
	\Br := (\Br_H)_{\mathrm{Cl}(H)} : R\MMackey\longrightarrow \underset{\mathrm{Cl}(H),\ H\leq G}{\prod} \overline{R}(H) \rtimes_c  W_G(H)\MMod
	\]
	%
	admits a right adjoint, which we denote by $\Phi$. 
	These two functors induce an adjoint equivalence 
	\[
	R_ Q \MMackey\simeq \underset{\mathrm{Cl}(H),\ H\leq G}{\prod}  Q\otimes_\mathbb Z \overline{R}(H) \rtimes_c  W_G(H)\MMod
	\]
	for any subring $Q\subseteq \mathbb Q$ containing $|G|^{-1}$ (see \Cref{Not:rationalization}).
\end{Thm}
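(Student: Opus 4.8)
The plan is to establish the adjunction first, then prove the equivalence under the hypothesis $|G|^{-1}\in Q$. For the adjunction, I would exhibit the right adjoint $\Phi$ explicitly rather than invoke an adjoint functor theorem: given a family $(V_H)_{\mathrm{Cl}(H)}$ with $V_H$ a left $\overline{R}(H)\rtimes_c W_G(H)$-module, I need to produce a Mackey $R$-module $\Phi((V_H)_H)$ whose Brauer quotient at each $H$ is naturally $V_H$. The natural candidate comes from a ``fixed-point''/coinduction type formula: at a subgroup $K\leq G$, set $\Phi((V_H)_H)(K)$ to be a suitable limit built from the $V_H$ for $H\leq K$, using the conjugation actions to glue, and define restriction, induction and conjugation maps by the standard formulas for such coinduced/fixed-point Mackey functors. (This is the module-theoretic analogue of the functor implicitly used by Th\'evenaz--Webb; concretely, $\Phi$ should be right adjoint to $\Br$ essentially by the universal property of the Brauer quotient as a quotient, combined with the fact that on the target side we have a product over conjugacy classes.) I would then verify the triangle identities, or equivalently check directly that $\Hom(\Br(M),(V_H)_H)\cong \Hom(M,\Phi((V_H)_H))$ naturally, by unwinding both sides into families of compatible $\overline{R}(H)\rtimes_c W_G(H)$-linear maps.

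Next, restrict attention to the $Q$-local subcategory $R_Q\MMackey\subseteq R\MMackey$ (\Cref{Not:rationalization}); note that $\Br$ carries $Q$-local modules to modules over $Q\otimes_\mathbb Z\overline{R}(H)\rtimes_c W_G(H)$, so both sides of the claimed equivalence make sense. The key structural input is that when $|G|^{-1}\in Q$, the category $R_Q\MMackey$ is \emph{semisimple}: every short exact sequence splits. I would deduce this from the classical fact that the Mackey algebra $\mu_Q(G)$ (more precisely the ``Green algebra'' of $R_Q$, the algebra whose modules are Mackey $R_Q$-modules) is semisimple when $|G|$ is invertible — this is exactly the algebraic core of Greenlees--May\,/\,Th\'evenaz--Webb, and I would either cite \cite{ThevenazWebb95} directly or reproduce the short argument that the relevant Mackey-type algebra has a separability idempotent involving $|G|^{-1}$. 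Simultaneously, by \Cref{Lem:semisimple}, each target category $Q\otimes_\mathbb Z\overline{R}(H)\rtimes_c W_G(H)\MMod$ is semisimple once we know $Q\otimes_\mathbb Z\overline{R}(H)$ is semisimple Artinian — but in fact we do not even need this, because semisimplicity of the source plus the adjunction will force it on the target.

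With semisimplicity in hand, the equivalence follows by a standard argument. First, the simple objects of $R_Q\MMackey$ are classified: by the general theory (again going back to Th\'evenaz--Webb in the plain case, and extending verbatim with $R$-coefficients) every simple Mackey $R_Q$-module has a well-defined ``vertex'' subgroup $H$ (minimal subgroup where it is nonzero, up to conjugacy) and is determined by $H$ together with a simple $\overline{R}(H)\rtimes_c W_G(H)$-module, namely its Brauer quotient at $H$; conversely $\Phi$ sends a simple module concentrated at $H$ back to a simple Mackey module with that vertex. Thus $\Br$ and $\Phi$ induce mutually inverse bijections on isomorphism classes of simple objects, with $\Br$ killing nothing in the $Q$-local world (a $Q$-local Mackey module all of whose Brauer quotients vanish must be zero — this is where $|G|^{-1}$ is used, via the fact that induction from proper subgroups becomes ``surjective enough'' rationally). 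Since both categories are semisimple, a functor inducing a bijection on simples and fully faithful on Hom-sets between simples is an equivalence; the unit $M\to\Phi\Br M$ and counit $\Br\Phi(V)\to V$ are then isomorphisms because they are so on simple objects and everything is a (possibly infinite) coproduct of simples, with both functors preserving coproducts ($\Br$ obviously, $\Phi$ because on a semisimple category a right adjoint automatically preserves coproducts, or directly from the formula).

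\textbf{Main obstacle.} The step I expect to be most delicate is the explicit construction of $\Phi$ and the verification that it is genuinely right adjoint to $\Br$ at the \emph{integral} level (before rationalizing) — getting the fixed-point/coinduction formula right so that all the Frobenius and double-coset relations for a Mackey $R$-module hold, and that the conjugation action of $W_G(H)$ is correctly incorporated through the skew group ring. The rationalization step itself is then comparatively soft, being an application of the semisimplicity already available in the literature; the real work is making $\Phi$ and the adjunction precise in the generality of an arbitrary Green functor $R$.
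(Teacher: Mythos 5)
Your description of the right adjoint $\Phi$ as a coinduction/fixed-point construction is in line with the paper, which realises $\Phi_H(V)$ as the Mackey module $X\mapsto \map^{W_G(H)}(X^H,V)$ and cites Bouc [Lemma~11.6.1] for the adjunction $\Br_H\dashv\Phi_H$, assembling $\Phi$ as a direct sum of the $\Phi_H$. So that part of the plan is sound, though you would still need to carry out the verification you flag as the ``main obstacle.''

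However, your route to the equivalence has a genuine gap that I do not see how to repair at this level of generality. You take as the ``key structural input'' that $R_Q\MMackey$ is semisimple whenever $|G|^{-1}\in Q$, citing Th\'evenaz--Webb, and then deduce the equivalence from a bijection on simple objects. That semisimplicity claim is \emph{false} for an arbitrary Green functor $R$. The Th\'evenaz--Webb / Greenlees--May result gives semisimplicity of the rational Mackey algebra, i.e.\ the case $R=B$ of the Burnside ring Green functor. For general $R$ the category $R_Q\MMackey$ is semisimple only if, in addition, all the rings $Q\otimes_{\mathbb Z}\overline{R}(H)$ are semisimple Artinian --- something the theorem does not assume and which already fails for $G$ trivial: then $R\MMackey$ is just $R(1)\MMod$, and $R(1)=\mathbb Q[x]$ gives a non-semisimple category. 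Your remark that semisimplicity of the source ``forces it on the target'' only compounds this; neither side need be semisimple, and the theorem is deliberately stated as an equivalence of abelian categories with no semisimplicity hypothesis or conclusion.

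The paper's proof avoids semisimplicity entirely. After making the unit $\eta$ and counit $\varepsilon$ of the adjunction explicit, it observes that $\varepsilon$ is always an isomorphism (a short computation using that $(G/H)^H$ is a transitive $W_G(H)$-set), and that the $G$-component of $\eta_M$ agrees with Th\'evenaz's Brauer homomorphism $M(G)\to\bigl(\prod_{H}\overline M(H)\bigr)^G$, which becomes bijective after inverting $|G|$ by a known result (Th\'evenaz, or Schwede Prop.\,3.4.18); invertibility of $\eta$ at all subgroups follows. Hence $\Br$ is fully faithful on $Q$-local objects, and essential surjectivity is obtained by splitting an idempotent endomorphism of $\Br\Phi(V_H)$ through the fully faithful $\Br$. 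None of this uses or implies semisimplicity of either side. If one restricts to Green functors $R$ whose rational Brauer quotients are all semisimple Artinian (as in the application $R=\Rep^G$ later in the paper), your semisimplicity argument could be made to work, but it does not prove the theorem in the stated generality.
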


\begin{Rem} \label{Rem:graded-cble-variants}
The theorem admits many easy variations, for which the exact same proof will go through smoothly. For instance, we may allow Mackey functors with values in a general commutative ring~$\mathbb K$ (\cf \Cref{Rem:intro-coefficients}), possibly graded. 
For our later purposes, we can take $\mathbb Z/2$-graded modules (over an ungraded~$R$) and restrict to countable abelian groups, on both sides, in order to obtain an equivalence
\[
R_ \mathbb Q \MMackey^{\mathbb Z/2}_{\aleph_1} \simeq \underset{\mathrm{Cl}(H),\ H\leq G}{\prod}  \mathbb Q\otimes_\mathbb Z \overline{R}(H) \rtimes_c  W_G(H)\MMod^{\mathbb Z/2}_{\aleph_1}
\]
between $\mathbb Z/2$-graded countable rational Mackey $R_\mathbb Q$-modules and the corresponding product category of $\mathbb Z/2$-graded countable modules. (The sense of the notation used here is that ``countable $=$ $\aleph_1$-small''.)
\end{Rem}

\begin{proof}[Proof of \Cref{thm.RightAdjointMackMod}]
To begin with, let us fix a subgroup $H\leq G$.

The existence of an adjunction $\Br_H\dashv \Phi_H$ is established in \cite[Lemma~11.6.1]{Bouc97}, in terms of the $G$-set picture (\Cref{Rec:Dress_pic}).
Beware of the notations: in \emph{loc.\,cit.}, the Green functor $R$ is denoted by~$A$; our functor $\Br_H$ is simply written $M\mapsto \overline{M}(H)$; its right adjoint $\Phi_H$ by $V\mapsto F\!P^G_{H,V}$; and the skew group ring $\overline{R}(H)\rtimes_cW_G(H)$ by $\overline{A}(H)\otimes \overline{N}_G(H)$.

Before we can prove the remaining claims, we need to identify the unit and counit of this adjunction. 
Explicitly, for every left $\overline{R}(H)\rtimes_cW_G(H)$-module~$V$, the Mackey $R$-module $\Phi_H(V)$ sends a finite $G$-set $X$ to 
\[
\map^{W_G(H)}( X^H , V),
\]
the set of $W_G(H)$-equivariant maps from the fixed-point set $X^H$ (equipped with the induced action of the Weyl group) into~$V$
 (see \cite[Def.\ on~p.\,296]{Bouc97} for its functoriality and $R$-action).
We immediately deduce from the proof in \emph{loc.\,cit.} that the bijection of Hom-sets for this adjunction,
\[
R\MMackey \big(M, \Phi_H (V) \big)\cong \overline{R}(H)\rtimes_c W_G(H) \MMod \big(\Br_H (M) , V\big ),
\]
sends a morphism $f \colon M\to \Phi_H(V)$ of left Mackey $R$-modules to the composite map
\[
\xymatrix{
\Br_H(M) \ar[r]^-{\Br_H(f)} &
 \Br_H( \Phi_H(V)) \ar[r]_-{\sim}^-{\varepsilon_{H,M}} & 
  V .
}
\]
Here $\varepsilon_{H,M}$ is the bijection 
\begin{align*}
\varepsilon_{H,V}\colon\Br_H\Phi_H(V)\cong \overline{(\Phi_HV)}(H) = \map^{W_G(H)}\big( (G/H)^H, V \big) & \overset{\sim}{\longrightarrow} V \\
 f &\longmapsto f(eH) ,
\end{align*}
where the first identification is simply $\overline{(\Phi_HV)}(H)= (\Phi_HV)(H)$ (indeed: for each subgroup $K\leq H$, a coset $gK\in G/K$ is an $H$-fixed point iff $H^g \subseteq K$, hence if $K\lneq H$ is proper we must have $(\Phi_HV)(K) = 0$), and the second one is due to $(G/H)^H$ being a transitive $W_G(H)$-set. 
One verifies easily that $\varepsilon_{H,V}$ is $W_G(H)$-equivariant, $\overline{R}(H)$-linear and natural in~$V$, from which it follows that the natural transformation $\varepsilon_H\colon \Br_H\Phi_H\to \Id$ must be the counit of the adjunction $\Br_H \dashv \Phi_H$. Note in particular, for later use, that the counit is an isomorphism. 

The unit $\eta_H$ is similarly determined by going backward in the bijection; its component at the object $M$ is the natural morphism $\eta_{H,M}\colon M\to \Phi_H \Br_H(M)$ of Mackey $R$-modules whose component at $X\in G\sset$ is given by 
\begin{align*}
\eta_{H,M}(X) \colon M(X) & \longrightarrow \map^{W_G(H)}(X^H, \overline{M}(H))  \\
m & \longmapsto \left(\;\;  x \mapsto [ M^*(\ev_x)(m) ] \;\;\right) ,
\end{align*}
where $\ev_x\colon G/H\to X$ is the map  $\gamma H\mapsto \gamma x$ of $G$-sets (which is well-defined since $x\in X^H$), and square brackets denote the equivalence class in the Brauer quotient.
Note in particular that the component at an orbit $X= G/K$ takes the form
\begin{align*}
\eta_{H,M}(G/K) \colon M(G/K) & \longrightarrow \map^{W_G(H)}((G/K)^H, \overline{M}(H))  \\
m & \longmapsto \left(\;\;  gK \mapsto [ c_g \res^K_{H^g}(m) ] \;\;\right) 
\end{align*}
(indeed $M^*(\ev_x) = c_g \Res^K_{H^g}$ when $x= gK$). 
For the subgroup $K=G$, writing $M(G)=M(G/G)$ as in the subgroups picture, this simplifies to the map:
\begin{equation} \label{eq:Brauer_hom}
\eta_{H,M}(G)\colon M(G) \longrightarrow \overline{M}(H)^{W_G(H)} , \qquad m\mapsto [\Res^G_H(m)].
\end{equation}

It is now time to sum the functors $\Phi_H$ in order to get the required right adjoint 
\begin{align*}
\Phi\colon \prod_{\mathrm{Cl}(H), H\leq G} \overline{R}(H)\rtimes_c W_G(H)  \longrightarrow R\MMackey, 
\quad\quad
(V_H )_H  \longmapsto \bigoplus_H \Phi_H(V_H)
\end{align*}
of the functor $\Br$ in \Cref{thm.RightAdjointMackMod}. 
The unit $\eta$ and counit $\varepsilon$ are the evident combinations of the units $\eta_H$ and counits $\varepsilon_H$ identified above. 

In order to conclude the proof of the theorem, we can now proceed in the same way as~\cite[Theorem 3.4.22]{Schwede18}, which is precisely the special case of the theorem where the Green functor $R$ is the Burnside ring Green functor~$B$, for which $B\MMackey = \Mackey$. 
Note that by forgetting $R$- and $\overline{R}(H)$-actions throughout in the above (and by working with the subgroups picture), we get exactly the same adjunction as in Schwede's proof.
We recall the argument for completeness.

Note that the functors, and therefore the adjunction $(\Br, \Phi, \eta, \varepsilon)$, can be restricted on both sides to the subcategories of $Q$-local objects. For the remainder of the proof we will only consider this restricted adjunction.
It is proved in \emph{loc.\,cit.\ }that, on $Q$-local modules, the unit of adjunction $\eta$ becomes invertible as a map of the underlying Mackey functors, hence also as a morphism of Mackey $R$-modules. 
Indeed, thanks to \eqref{eq:Brauer_hom}, we can identify the $G$-component of the unit $\eta_M$ with Th\'evenaz's Brauer homomorphism $M(G)\to ( \prod_{H} \overline{M}(H))^G\cong \prod_{\mathrm{Cl}(H)} \overline{M}(H)^{W_G(H)} $, which is known to become bijective after inverting~$|G|$; see \cite{Thevenaz88} or \cite[Prop.\,3.4.18]{Schwede18}.
Then one can easily deduce from this that the $H$-component for each $H\leq G$ is also invertible, see \emph{loc.\,cit.}

Since (always between $Q$-local objects) the unit of adjunction is invertible, it follows formally that the left adjoint $\Br$ is fully faithful, and it remains to prove it is essentially surjective.
To see this, take an arbitrary family $(V_H)_H$ of $Q$-local modules over the skew rings.
By the additivity of the functor~$\Br$, it suffices to show that its essential image contains each summand~$V_H$.
Let $e_H= (e_{H,K})_K$ be the idempotent endomorphism of the object
\[ \Br\Phi (V_H)\;\;\in\;\; \prod_{\mathrm{Cl}(K)} Q\otimes_\mathbb Z \overline{R}(K)\rtimes_c W_G(K) \MMod 
\]
whose components are $e_{H,H}:= \id_{V_H}$ and $e_{H,K}:= 0 $ for $K\neq H$.
Note that $\Img(e_H) = \Br_H\Phi_H(V_H)$ by construction, and that the latter module is isomorphic to $V_H$ via the counit $\varepsilon_{H,V_H}$ of the adjunction $\Br_H\dashv \Phi_H$.

Since $\Br$ is fully faithful on $Q$-local objects, there exists an idempotent $\tilde e_{H}$ on $\Phi(V_H)\in R_Q\MMackey$ with $\Br(\tilde e_{H})= e_{H}$. Hence $\Phi_H(V_H) \cong \Img(\tilde e_{H})\oplus \Img(\id - \tilde e_H)$, and by additivity we deduce that 
$ \Br(\Img \tilde e_H) \cong \Img (\Br  \tilde e_H) = \Img (e_H) \cong V_H$. 
Therefore $V_H$ belongs to the essential image of~$\Br$.

This completes the proof of \Cref{thm.RightAdjointMackMod}.
\end{proof}

\begin{Rem}
In order to prove \Cref{thm.RightAdjointMackMod} for a general Green functor~$R$, it would suffice to verify that the adjunction $(\Br,\Phi,\eta,\varepsilon)$ for the case $R=B$ of the Burnside ring, as made explicit in \cite[Theorem 3.4.22]{Schwede18}, lifts to the appropriate module categories.
However, if we want to work exclusively with the subgroups picture of Mackey functors as Schwede does, the definition of the $R$-action on $\Phi_H(V)$ and the resulting verifications that units and counits are compatible (which we have also carried out in details!) are surprisingly intricate and amount to several pages of calculations. The above approach, which partly uses general constructions within the $G$-set picture, is much more economical. 
It also reveals a more conceptual definition of $\Br_H$ as the composite of certain canonical adjoints (\cf \Cref{sec:tensor}).
\end{Rem}

\begin{Rem} \label{Rem:idempotents}
The rational Burnside ring of~$G$, $B(G)_\mathbb Q = \mathbb Q \otimes_\mathbb Z B(G)$, canonically acts on rational Mackey functors, hence its idempotent elements split their category $\Mackey_\mathbb Q$.  
These idempotents $e_H\in B(G)_\mathbb Q$ are well-known to be parametrized by the subgroups $H$ of~$G$, and we obtain a complete orthogonal family by choosing one for each $G$-conjugacy class~$\mathrm{Cl}(H)$.
The resulting splitting is in fact the same as appears on the right-hand side in~\eqref{eq:intro-equiv}; see the remarks before \cite[Theorem~3.4.22]{Schwede18}.
\end{Rem}

\section{Comparing the tensor structures}
\label{sec:tensor}%
\medskip
We now consider the tensor structures on our categories.

\begin{Thm}
\label{Thm:Brauer_tensor}
Suppose $R$ is a commutative Green functor for~$G$. 
Then for each $H\leq G$ the Brauer quotient functor $\Br_H$ of \Cref{Def:Psi_H} is strong symmetric monoidal with respect to the tensor structures of  \Cref{Rec:tensor-Mackey} and~\Cref{Rem:tensor-skew}. 
Therefore the functor $\Br$ of \Cref{thm.RightAdjointMackMod} is symmetric monoidal (where its target category is equipped with the product tensor structure), and it induces an equivalence
\[
R_Q \MMackey \simeq
{\underset{\mathrm{Cl}(H),\ H\leq G}{\prod}  Q\otimes_\mathbb Z \overline{R}(H) \rtimes_c  W_G(H)\MMod }
\]
of tensor categories for each subring $Q\subseteq \mathbb Q$ containing~$|G|^{-1}$.
\end{Thm}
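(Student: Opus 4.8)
The plan is to establish the symmetric monoidal statement one piece at a time, starting with the individual Brauer quotients $\Br_H$ and then assembling them. First I would verify the key computation: for a commutative Green functor $R$ and subgroup $H\leq G$, the canonical map gives an isomorphism of $\overline R(H)$-modules
\[
\overline{M\otimes_R N}(H)\;\cong\;\overline M(H)\otimes_{\overline R(H)}\overline N(H).
\]
The natural candidate for this map comes from the $G$-set picture of \Cref{Rec:tensor-Mackey}: evaluating $M\otimes_R N$ at $X=G/H$ presents it as a quotient of $\bigoplus_{\theta\colon Y\to G/H} M(Y)\otimes_\mathbb Z N(Y)$; restricting attention to the component indexed by $\theta=\id_{G/H}$ yields a map $M(H)\otimes_\mathbb Z N(H)\to (M\otimes_R N)(G/H)$ which, after passing to Brauer quotients, one checks factors through $\overline M(H)\otimes_{\overline R(H)}\overline N(H)$. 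The first of the relations \eqref{eq:rel1and2} (applied to surjections $G/H'\twoheadrightarrow G/H$ for $H'\lneq H$, or rather the relevant maps with non-trivial fixed points) is exactly what forces the images of inductions from proper subgroups to die on both sides, so the construction is well-defined; the relation \eqref{eq:rel3} is what makes it $\overline R(H)$-balanced. For the inverse, I would use the fact that $(G/H)^H$ is a transitive $W_G(H)$-set (as already exploited in the proof of \Cref{thm.RightAdjointMackMod} for the counit $\varepsilon_{H,V}$): any orbit $\theta\colon Y\to G/H$ with $Y^H\neq\emptyset$ can be connected, via the relations, to the identity component, so the natural map is in fact surjective, and a careful bookkeeping of the relations shows it is injective after quotienting by inductions. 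Compatibility with the $W_G(H)$-conjugation actions is immediate since conjugation is one of the bifunctorialities and the diagonal action on the right-hand side is precisely $(\varphi\otimes\varphi')_w=\varphi_w\otimes\varphi_w$ from \Cref{Rem:tensor-skew}.

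Second, I would check that this isomorphism sends the tensor unit $R$ of $R\MMackey$ to the tensor unit $(\overline R(H),c)$ of $\overline R(H)\rtimes_c W_G(H)\MMod$ — which is immediate from the definition $\Br_H(R)=(\overline R(H),c)$ in \Cref{Rems:Psi}(c)--(e) and \Cref{Rem:tensor-skew} — and that the constraint isomorphisms match up. Since both symmetric monoidal structures are ultimately built from $\otimes_\mathbb Z$ (resp.\ $\otimes_{\overline R(H)}$) with the obvious associativity, unit and symmetry isomorphisms inherited from the underlying abelian groups, and since the isomorphism above is itself induced by the identity on the relevant $M(H)\otimes_\mathbb Z N(H)$ components, the coherence diagrams commute essentially by inspection — this is the routine part. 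This establishes that each $\Br_H$ is strong symmetric monoidal.

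Third, the product functor $\Br=(\Br_H)_{\mathrm{Cl}(H)}$ is then strong symmetric monoidal for the product tensor structure on the target simply because a product of strong symmetric monoidal functors is such (the tensor product in $\prod_H \mathcal C_H$ is computed componentwise). Finally, for the last claim: by \Cref{thm.RightAdjointMackMod}, $\Br$ restricts to an equivalence $R_Q\MMackey\simeq\prod_{\mathrm{Cl}(H)} Q\otimes_\mathbb Z\overline R(H)\rtimes_c W_G(H)\MMod$ once $|G|^{-1}\in Q$; I would note that the $Q$-local subcategory $R_Q\MMackey\subseteq R\MMackey$ is a tensor subcategory (a tensor product of $Q$-local Mackey modules is again $Q$-local, since $\otimes_R$ is built from $\otimes_\mathbb Z$ which preserves $Q$-modules), and similarly on the right after tensoring with $Q$, where $Q\otimes_\mathbb Z\overline R(H)\rtimes_c W_G(H)=(Q\otimes_\mathbb Z\overline R(H))\rtimes_{\tilde c} W_G(H)$ by \Cref{Rem:scalar_ext}. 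An equivalence of categories that is also a strong symmetric monoidal functor is an equivalence of tensor categories, so we are done.

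The main obstacle will be the first step — proving that the natural comparison map $\overline{M\otimes_R N}(H)\to\overline M(H)\otimes_{\overline R(H)}\overline N(H)$ is an isomorphism. Surjectivity is the more delicate direction: one must show that every summand $M(Y)\otimes_\mathbb Z N(Y)$ contributing to $(M\otimes_R N)(G/H)$ is, modulo the relations $\mathcal J$ and modulo inductions from proper subgroups of $H$, already captured by the identity component $M(H)\otimes_\mathbb Z N(H)$. The transitivity of the $W_G(H)$-action on $(G/H)^H$ handles the ``$Y$ with enough fixed points'' case, while orbits $Y=G/K$ with $K\not\geq$ (a conjugate of) $H$ contribute only through maps that factor through induction, hence vanish in the Brauer quotient; making this dichotomy precise, and tracking exactly which instances of relations \eqref{eq:rel1and2}--\eqref{eq:rel3} are needed, is the real content. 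This is closely parallel to the computation of the counit $\varepsilon_{H,V}$ and of $\overline{(\Phi_H V)}(H)$ carried out in the proof of \Cref{thm.RightAdjointMackMod}, and indeed one could alternatively deduce it from the fact, alluded to in the final remark of \Cref{sec:GreenlessMay-Module-Mackey}, that $\Br_H$ is a composite of canonical (op)lax monoidal adjoints in the $G$-set picture — but the direct verification via the explicit formula for $\otimes_R$ from \cite[\S6.6]{Bouc97} is the most transparent route.
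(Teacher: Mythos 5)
Your outline correctly identifies the comparison map (the inclusion at $\theta=\id_{G/H}$, descending to the Brauer quotients and the balanced tensor product), the well-definedness via relations~\eqref{eq:rel1and2} and~\eqref{eq:rel3}, the unit and coherence checks, the componentwise assembly of $\Br$, and the final application of \Cref{thm.RightAdjointMackMod}; all of this matches the paper. The gap is at the one step that actually carries content: you assert that the comparison map $\overline{M}(H)\otimes_{\overline{R}(H)}\overline{N}(H)\to\overline{(M\otimes_R N)}(H)$ is a bijection because ``a careful bookkeeping of the relations shows it is injective after quotienting by inductions,'' and you declare the direct verification ``the most transparent route.'' Neither the bookkeeping nor the orbit dichotomy is actually carried out, and the paper explicitly says the opposite: verifying bijectivity directly from the relations~\eqref{eq:rel1and2} and \Cref{Def:Brauer-quot} ``appears to be difficult.'' In particular, the injectivity side is genuinely delicate --- one would have to show that any element of $M(H)\otimes_{\mathbb{Z}} N(H)$ which, modulo $\mathcal{J}$, becomes expressible in terms of summands indexed by other orbits $\theta\colon Y\to G/H$ already lies in the subgroup generated by proper inductions and the $\overline{R}(H)$-balancing relations; the transitivity of $W_G(H)$ on $(G/H)^H$ is nowhere near sufficient for this.

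The paper sidesteps this by first reducing to the Burnside case $R=B$ (since quotienting by~\eqref{eq:rel3} on $(M\otimes_B N)(H)$ has precisely the same effect as imposing $\overline{R}(H)$-balancing on the source, so the general comparison map follows from the $R=B$ one) and then replacing the explicit relation-chasing by biset machinery: writing the biset $U={}_{W_G(H)}(H\backslash G)_G$ as a composite of an inflation biset and an induction biset, one identifies $\Br_H$ with the composite $\ev_1\circ\mathcal{L}_U$, where $\mathcal{L}_U$ is the left adjoint of $-\circ U$, and multiplicativity of $\Br_H$ is then deduced from that of $\mathcal{L}_U$ via \cite[Prop.\,10.1.2]{Bouc97} and the trivial multiplicativity of evaluation $\ev_1$ at the terminal $W_G(H)$-set. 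You mention this alternative in passing but dismiss it; it is, in fact, the argument the paper relies on, precisely because the direct verification you propose is not carried out anywhere and does not appear to be feasible as stated.
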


\begin{proof}
As with \Cref{thm.RightAdjointMackMod}, we are essentially going to reduce the proof to the case of the Burnside Green functor~$B$, for which $B\MMackey = \Mackey$.
As mentioned in the introduction, in this case the result is known to be true by experts but we nonetheless provide a (sketch) of proof for completeness.

We need to find a symmetric monoidal structure on~$\Br_H$, that is an isomorphism
\[
\unit \overset{\sim}{\longrightarrow} \Br_H(\unit)
\]
and a natural isomorphism (for $M,N\in R\MMackey$)
\begin{equation} \label{eq:multipl_Psi}
\Br_H(M) \otimes \Br_H(N)
\overset{\sim}{\longrightarrow}
 \Br_H( M \otimes_R N) 
\end{equation}
satisfying unitality and associativity constraints.
For the first isomorphism, we simply take the identity map of the Brauer quotient ring $\overline{R}(H)$.
For the second one, consider the following commutative diagram of abelian groups:
\begin{equation} \label{eq:big_diagram}
\vcenter{
\xymatrix@R=10pt{
 M(H) \otimes_\mathbb Z N(H)
  \ar@{->>}[dd]_{br \,\otimes\, br} 
  \ar[r] & 
 { \underset{\theta\colon Y\to G/H}{\bigoplus} M(Y) \otimes_\mathbb Z N(Y) }
     \ar@{->>}[dd]
      \ar@{->>}[dr]^-{\textrm{\eqref{eq:rel1and2}}} & \\
     && (M\otimes_B N)(H) 
      \ar@{->>}[dl]_{br}
       \ar@{->>}[dd]^{\textrm{\eqref{eq:rel3}}}  \\
\overline{M}(H) \otimes_\mathbb Z \overline{N}(H)
 \ar@{-->}[r]^-{\sim} 
      \ar@{->>}[dd] &
 \overline{(M\otimes_B N)}(H) 
       \ar@{->>}[dd] & \\
       &&  (M \otimes_R N)(H)
        \ar@{->>}[dl]_{br} \\
\overline{M}(H) \otimes_{\overline{R}(H)} \overline{N}(H)
 \ar@{-->}[r]^-{\sim} &
 \overline{(M\otimes_RN)}(H) &
}
}
\end{equation}
The direct sum is taken over all maps $\theta\colon Y\to G/H$ of finite $G$-sets (as in \Cref{Rec:tensor-Mackey}, which uses the $G$-sets picture and in particular $M(G/H)= M(H)$). 
The top horizontal arrow is the inclusion of the summand indexed by $\theta = \id_{G/H}$, and all arrows $\twoheadrightarrow$ denote quotient maps, with in particular $br$ denoting a Brauer quotient $P(H)\twoheadrightarrow \overline{P}(H)$ at $H$ for the relevant Mackey functor~$P$.

We claim that the top inclusion map descends to a bijection between the bottom quotients, and that this yields the required multiplication map~\eqref{eq:multipl_Psi}.
Indeed, suppose for a moment that it does descend to a well-defined bijection. 
It is immediate to see that this map commutes with the diagonal induced conjugation actions of $W_G(H)$ (\cf \cite[Prop.\,1.6.2]{Bouc97} for the conjugation action on the right-hand side) and with the induced $\overline{R}(H)$-actions, so it is a morphism of $\overline{R}(H)\rtimes_c W_G(H)$-modules.
It is equally easy to see that this map is natural in $M,N$ and that it satisfies the unit and associativity axioms of a monoidal structure on~$\Br_H$, as these properties are inherited from the analog properties of the inclusion map at $\theta = \id_{G/H}$.

So it suffices to show the induced map exists and is bijective. Moreover, if we knew the claim holds for $R=B$ the Burnside ring---that is, if we knew there is a well-defined horizontal bijection in the middle of~\eqref{eq:big_diagram}---then we could conclude for general $R$ as well. 
This is because then, if we quotient out the relations \eqref{eq:rel3} on $(M\otimes_BN)(H)$ to obtain $(M\otimes_RN)(H)$, the effect on $\overline{M}(H)\otimes_\mathbb Z \overline{N}(H)$ is precisely that of killing the elements $rm\otimes n - m\otimes rn$, as wished.

All in all, it only remains to show that there is a well-defined induced bijection in the middle, \ie for the tensor product of plain Mackey functors.
It would be nice to verify this directly from the relations \eqref{eq:rel1and2} and \Cref{Def:Brauer-quot}, but this appears to be difficult. 
Another approach would be to use the action of the rational Burnside ring (\cf \Cref{Rem:idempotents}) and the properties of its idempotent elements, which indeed shows the required bijection but only rationally; see \cite[\S4.8]{BarnesKedziorek22} for this approach.
We therefore conclude by sketching a proof which works integrally; it exploits the full power of the $G$-set picture of Mackey functors.

The Brauer quotient at~$H$ admits a different description in terms of the general theory of \cite[Ch.\,8-10]{Bouc97} for constructing functors between categories of Mackey functors.
Namely, let $U = {}_{W_G(H)}U_G$ be the set $H\backslash G$ of left cosets equipped with the right $G$-action and the induced left $W_G(H)$-action  by multiplication.
This biset yields a functor $-\circ U\colon \Mackey(W_G(H)) \to \Mackey(G)$ by $(M\circ U )(X)= M(U\circ_{W_G(H)} X)$ where $\circ_{W_G(H)}$ is the biset composition of \cite[\S8.1]{Bouc97}.
The functor $-\circ U$ has a left adjoint, denoted $\mathcal L_U$.
The biset $U$ is evidently the following composite of bisets
\[
U = {}_{W_G(H)}(H\backslash G)_G \cong \underbrace{{}_{W_G(H)} W_G(H)_{N_G(H)}}_{U_2} \;\circ\; \underbrace{ {}_{N_G(H)} G_G}_{U_1} \;\;,
\]
for which $-\circ U_1 = \Infl_{W_G(H)}^{N_G(H)}$ and $-\circ U_2= \Ind^G_{N_G(H)}$ are, respectively, the inflation and induction functors for Mackey functors. 
Hence by taking left adjoints we get
\[
\mathcal L_U \cong \mathcal L_{U_1}\circ \mathcal L_{U_2} = (-)^H \circ \Res^G_{N_G(H)}
\]
where $(-)^H\colon \Mackey(N_G(H))\to \Mackey(W_G(H))$ is the functor introduced in \cite[Prop.\,9.9.1]{Bouc97}. 
As remarked just after \emph{loc.\,cit.}, $M^H$ coincides with the functor $(-)^+$ of~\cite{ThevenazWebb95}, for which the evaluation at the terminal $W_G(H)$-set $1= W_G(H)/W_G(H)$ is precisely the Brauer quotient at~$H$ (by direct inspection; \cf \cite[p.\,1872]{ThevenazWebb95}).
Note also that the Brauer quotient of $M\in \Mackey(G)$ at $H$ is the same as the Brauer quotient of $\Res^G_{N_GH}(M)\in \Mackey(N_G(H))$ at~$H$ (again by direct inspection, using that $\Res^G_{N_G(H)}(M)= M(\Ind^G_{N_G(H)} - )$).
In sum, $\Br_H$ is isomorphic to the composite functor 
\[
\xymatrix{
\Mackey(G) 
 \ar[r]^-{\mathcal L_U} &
\Mackey(W_G(H)) 
 \ar[r]^-{\ev_1} &
 W_G(H)\MMod .
}
\]
Now, there is a natural isomorphism 
\begin{equation} \label{eq:mult_LU}
\mathcal L_U (M) \otimes_B \mathcal L_U(N) 
\cong \mathcal L_U( M \otimes_B N)
\end{equation}
 by \cite[Prop.\,10.1.2]{Bouc97} and because the right $G$-action on $U$ is transitive.
On the other hand, since $1$ is terminal, we easily see from the relations \eqref{eq:rel1and2} that 
\begin{equation} \label{eq:mult_ev1}
\ev_1 (M') \otimes_\mathbb Z \ev_1 (N')\cong \ev_1(M' \otimes_B N').
\end{equation}
The isomorphism \eqref{eq:multipl_Psi} is then obtained by combining \eqref{eq:mult_LU} and~\eqref{eq:mult_ev1}. 
More precisely, by chasing through the various adjunctions and the construction of \eqref{eq:mult_LU} (going back  all the way to the diagonal maps $\delta^U_{X,Y}\colon U\circ (X\times Y)\to (U\circ X)\times (U\circ Y)$ of \cite[\S10.1]{Bouc97}), one can verify that the combined isomorphism is simply the map induced from the inclusion at $\theta=\Id_{G/H}$, as claimed above. 
\end{proof}

\section{Application to equivariant KK-theory}
\label{sec:Module-Mackey-Gcell}%
\medskip

Let $\KK^G$ denote the $G$-equivariant Kasparov category of separable complex $G$-C*-algebras. 
As proved in~\cite{MeyerNest06}, this is a tensor triangulated category with arbitrary countable coproducts.
Following \cite{DellAmbrogio14},  we denote by $\Cell{G}\subseteq \KK^G$ the full subcategory of \emph{$G$-cell algebras}, the localizing subcategory  generated by the function $G$-C*-algebras $\Cont(G/H)$ for all $H\leq G$. 
We also write
\[
 \Cell{G}_\mathbb Q 
 \]
for the category of \emph{rationalized} $G$-cell algebras. 
More precisely, $\Cell{G}_\mathbb Q$ is defined to be the quotient $\Cell{G}/\Loc{\cone(f)\mid f\in \mathbb Z^\times}$, \ie the Bousfield localization at the central multiplicative system of maps $\mathbb Z^\times \subseteq \Rep(G)= \End_{\Cell{G}}(\unit)$.
It inherits nice generating properties  from $\Cell{G}$: it is a rigidly-compactly${}_{\aleph_1}$ generated tt-category, characterized by the property that $\Cell{G}_\mathbb Q(A, B)=\mathbb Q \otimes_\mathbb Z \KK^G_*(A, B)$ whenever $A$ is a compact${}_{\aleph_1}$ object of $\Cell{G}$ and $B$ is arbitrary; see \cite[\S2]{DellAmbrogio10}.

In this final section, we apply our previous results to provide an explicit algebraic model of $\Cell{G}_\mathbb Q$ in the form of \Cref{Thm:rational_cell_vs_Mack}.

Recall that the complex representation rings $\Rep(H)$ of all subgroups $H\leq G$ organize themselves into a Green functor (for~$G$) which we denote $\Rep^G$.
This is one of the most classical and ubiquitous examples of Green functors, as is its rationalization~$\Rep^G_\mathbb Q$ (\Cref{Not:rationalization}). It is well-known how its Brauer quotients look like:

\begin{Prop} \label{prop.RationalRepThev}
\label{Prop:Brauer-for-RepQ}
Let $\overline{\Rep^G_\mathbb Q}(H)$ be the Brauer quotient  (\Cref{Def:Brauer-quot}) of $\Rep^G_\mathbb Q$ at a subgroup $H\leq G$. Then:
	\begin{enumerate}
		\item There is an isomorphism of rings
			\begin{equation*}
				\overline{\Rep^G}(H)\cong \left\{\begin{array}{cl}
									\mathbb{Q}(\zeta_{n})&\text{if $H=\langle g\rangle$ is cyclic of order~$n$}\\\
									0&\text{otherwise},
					   			  \end{array}\right.
			\end{equation*}
			where $\mathbb{Q}(\zeta_{n})$ is the field extension of $\mathbb Q$ by a primitive $n$-th root of unity~$\zeta_{n}$. 
				\item Under (a), the conjugation action $c$ of $W_G(H)$ on $\overline{\Rep^G}(H)$ is through the Galois group. More precisely, fixing the generator $g$ of the cyclic subgroup~$H$ determines a group homomorphism $m_H\colon W_G(H)\to (\mathbb Z/n)^\times$ through the formula $ g^{m_H(w)}= w^{-1}gw$ for all $w\in W_G(H)$.
		Then $c_w (\zeta_{n})= \zeta_{n}^{m_H(w)}$.
	
	\end{enumerate}
\end{Prop}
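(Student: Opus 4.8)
The plan is to compute $\overline{\Rep^G}(H)$ directly from the definition as a quotient of $\Rep(H)$ by the sum of the images of the induction maps $\ind^H_{H'}$ for proper subgroups $H' \lneq H$, together with the Galois/Brauer-type analysis of the resulting ring. I would first reduce everything to a problem purely about the subgroup $H$ itself: by the commutation relations for Mackey functors, $\overline{\Rep^G}(H)$ depends only on $\Rep$ restricted to the poset of subgroups of $H$ (and the $W_G(H)$-action only enters part (b)), so I may as well work with the classical representation ring $\Rep(H)$ and its induction maps $\ind^H_{H'}\colon \Rep(H') \to \Rep(H)$. The key classical input is Artin's induction theorem: rationally, every element of $\Rep(H)_\mathbb Q$ is a $\mathbb Q$-linear combination of representations induced from \emph{cyclic} subgroups of $H$. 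This immediately gives that $\overline{\Rep^G_\mathbb Q}(H) = 0$ unless $H$ is itself cyclic, since if $H$ is noncyclic then all cyclic subgroups are proper and Artin forces the whole of $\Rep(H)_\mathbb Q$ into the ideal we are quotienting by.

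For the remaining case $H = \langle g \rangle$ cyclic of order $n$, I would identify $\overline{\Rep^G_\mathbb Q}(H)$ explicitly. The character ring $\Rep(H)_\mathbb Q$ is isomorphic to the ring of $\mathbb Q$-valued functions that are class functions, i.e.\ $\Rep(H)_\mathbb Q \cong \mathrm{Map}(H, \mathbb Q)^{\text{(characters)}} \cong \prod_{d \mid n} \mathbb Q(\zeta_d)$, decomposed by the "order of the element" — concretely via the primitive idempotents $e_d$ supported on elements of order exactly $d$, with $e_d \Rep(H)_\mathbb Q \cong \mathbb Q(\zeta_d)$. The point is then that for a proper (hence also cyclic) subgroup $H' = \langle g^{n/d}\rangle$ of order $d < n$, the induced characters $\ind^H_{H'}(\chi)$ vanish on the generators of $H$ (an induced character from a proper subgroup of a cyclic group is supported away from the generators), so $\sum_{H' \lneq H}\ind^H_{H'}(\Rep(H')_\mathbb Q)$ lands in $\prod_{d \mid n,\, d < n} e_d \Rep(H)_\mathbb Q$; conversely Artin (applied inside each proper subgroup, or a direct dimension count) shows it is \emph{all} of that product. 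Hence the quotient is exactly the remaining factor $e_n \Rep(H)_\mathbb Q \cong \mathbb Q(\zeta_n)$, where the isomorphism sends the class of a representation $\rho$ to the value $\chi_\rho(g) \in \mathbb Q(\zeta_n) \subseteq \mathbb C$ of its character at the chosen generator $g$. This proves (a).

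For (b), with the identification of part (a) in hand — class of $\rho$ $\mapsto$ $\chi_\rho(g)$ — the conjugation action is transparent: for $w \in W_G(H)$ lifted to $\tilde w \in N_G(H)$, the conjugation map $c_{\tilde w}$ on $\Rep(H)$ sends $\rho$ to $\rho \circ c_{\tilde w^{-1}}$, so its character at $g$ is $\chi_\rho(\tilde w g \tilde w^{-1}) = \chi_\rho(w^{-1} g w) = \chi_\rho(g^{m_H(w)})$, using the defining relation $g^{m_H(w)} = w^{-1}gw$ and that $m_H(w) \in (\mathbb Z/n)^\times$ is well-defined (independent of the lift $\tilde w$) since inner conjugation by $H$ fixes $g$. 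Finally, for the linear character $\zeta\colon g \mapsto \zeta_n$ one has $\chi_\zeta(g^{m_H(w)}) = \zeta_n^{m_H(w)}$, and this character generates $\mathbb Q(\zeta_n)$ over $\mathbb Q$, so $c_w$ is indeed the Galois automorphism $\zeta_n \mapsto \zeta_n^{m_H(w)}$. That $m_H$ is a group homomorphism $W_G(H) \to (\mathbb Z/n)^\times$ is a direct check from $g^{m_H(w w')} = (ww')^{-1}g(ww') = w'^{-1}(w^{-1}gw)w' = w'^{-1}g^{m_H(w)}w' = g^{m_H(w)m_H(w')}$.

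The main obstacle I anticipate is not conceptual but bookkeeping: pinning down \emph{which} factor $\prod_{d\mid n}\mathbb Q(\zeta_d)$ survives, and in particular showing the induced-from-proper-subgroups ideal is exactly the product over $d < n$ (the inclusion $\subseteq$ is easy from support considerations; the reverse inclusion $\supseteq$ needs Artin inside the proper cyclic subgroups, or an explicit description of enough induced representations). One should also take a little care that "the" isomorphism in (a) depends on the choice of generator $g$ — different generators differ precisely by a Galois automorphism — which is exactly why the statement of (b) is phrased relative to a fixed $g$; this is a feature, not a bug, and costs nothing once noticed.
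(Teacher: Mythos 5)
Your argument is correct and is essentially the standard proof. The paper itself gives no proof of this proposition, simply citing Th\'evenaz (\S9 of~\cite{Thevenaz88}) for part~(a) and~\cite{BHIKM22} for part~(b); the route you take --- Artin induction to kill the non-cyclic case, the decomposition $\Rep(H)_\mathbb Q\cong\prod_{d\mid n}\mathbb Q(\zeta_d)$ together with the fact that characters induced from proper subgroups of a cyclic group vanish on generators, and then the observation that under the identification ``evaluate the character at~$g$'' the $W_G(H)$-action is visibly Galois --- is the standard one and presumably the content of those references. Two small comments. First, the reverse inclusion in the cyclic case does not actually require Artin: since $H$ is abelian, $(\ind^H_{H'}\chi)(h)=[H:H']\,\chi(h)$ for $h\in H'$ and $=0$ otherwise, so $\ind^H_{H'}$ carries $\Rep(H')_\mathbb Q$ isomorphically onto $\prod_{e\mid d}\mathbb Q(\zeta_e)$ with $d=|H'|$; taking $H'=\langle g^{n/d}\rangle$ for each proper divisor $d<n$ already produces the full complement $\prod_{d\mid n,\, d<n}\mathbb Q(\zeta_d)$. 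Second, in part~(b) there is a sign slip: with the paper's conventions the conjugate of $\rho$ by a lift $\tilde w$ has character $h\mapsto\chi_\rho(\tilde w^{-1}h\tilde w)$, so its value at $g$ is $\chi_\rho(\tilde w^{-1}g\tilde w)$, not $\chi_\rho(\tilde w g\tilde w^{-1})$; your subsequent equality $\chi_\rho(w^{-1}gw)=\chi_\rho(g^{m_H(w)})$ is then correct and the conclusion stands.
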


\begin{proof} For part~(a) see \eg\ \cite[Section 9]{Thevenaz88}. 
For~(b) see \eg\ \cite[\S4]{BHIKM22}.
\end{proof}

We immediately deduce:

\begin{Thm}\label{thm.EquivRationalMackMod}
		For every finite group $G$, there is an equivalence
		\[ 
		\Rep^G_{\mathbb{Q}}\MMackey
		\overset{\sim}{\longrightarrow} 
		{\prod_{\mathrm{Cl}(H),\, H\leq G\; \textrm{cyclic}}}  \mathbb{Q}(\zeta_{|H|}) \rtimes_c W_G(H)\MMod, 
		\]
		of symmetric monoidal categories, where the right-hand side is the product of the tensor categories, as in \Cref{Rem:tensor-skew}, of modules over the skew group rings of \Cref{Prop:Brauer-for-RepQ}. 
		In particular, $\Rep^G_{\mathbb{Q}}\MMackey$ is a semisimple abelian tensor category.
\end{Thm}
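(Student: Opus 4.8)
The plan is to obtain \Cref{thm.EquivRationalMackMod} as an immediate corollary of the two main theorems by specializing $R = \Rep^G$ and $Q = \mathbb{Q}$. First I would invoke \Cref{thm.RightAdjointMackMod} (and its tensor refinement \Cref{Thm:Brauer_tensor}) with this choice of data: since $|G|^{-1}\in\mathbb{Q}$, these give an equivalence of tensor categories
\[
\Rep^G_\mathbb{Q}\MMackey \;\simeq\; \underset{\mathrm{Cl}(H),\ H\leq G}{\prod}\; \mathbb{Q}\otimes_\mathbb{Z}\overline{\Rep^G}(H)\rtimes_c W_G(H)\MMod,
\]
and the functor realizing it is $\Br$, \ie\ taking Brauer quotients componentwise. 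It only remains to identify the right-hand side explicitly, which is exactly what \Cref{Prop:Brauer-for-RepQ} provides. By part~(a) of that proposition, the Brauer quotient $\overline{\Rep^G}(H)$ vanishes unless $H$ is cyclic, and when $H=\langle g\rangle$ has order $n$ it is the field $\mathbb{Q}(\zeta_n)$; since a skew group ring with zero coefficient ring has zero module category, the factors indexed by non-cyclic subgroups drop out of the product. This reduces the product to one indexed by $G$-conjugacy classes of cyclic subgroups, as stated.

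Next I would pin down the group action on the surviving factors. Here I observe that $\mathbb{Q}(\zeta_n)$ is already a $\mathbb{Q}$-algebra, so there is no distinction between $\overline{\Rep^G}(H)$ and $\mathbb{Q}\otimes_\mathbb{Z}\overline{\Rep^G}(H)$; in particular $\mathbb{Q}\otimes_\mathbb{Z}\overline{\Rep^G}(H)\rtimes_c W_G(H) = \mathbb{Q}(\zeta_{|H|})\rtimes_c W_G(H)$ (\cf\ \Cref{Rem:scalar_ext} and \Cref{Rem:intro-coefficients}). Part~(b) of \Cref{Prop:Brauer-for-RepQ} then identifies the conjugation action $c$ of $W_G(H)$ on $\mathbb{Q}(\zeta_{|H|})$ as the Galois action $c_w(\zeta_{|H|})=\zeta_{|H|}^{m_H(w)}$ determined by the generator $g$ via $g^{m_H(w)}=w^{-1}gw$. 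This is precisely the description of the skew group rings appearing in the statement, so the two sides of the asserted equivalence agree, and the functor inducing it is the composite of $\Br$ with these identifications.

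Finally, for the semisimplicity claim I would argue as follows. Each field $\mathbb{Q}(\zeta_{|H|})$ is (trivially) a semisimple Artinian ring containing $|W_G(H)|^{-1}$ (as $\mathbb{Q}\subseteq\mathbb{Q}(\zeta_{|H|})$), so \Cref{Lem:semisimple} applies and each $\mathbb{Q}(\zeta_{|H|})\rtimes_c W_G(H)$ is semisimple Artinian; hence each factor $\mathbb{Q}(\zeta_{|H|})\rtimes_c W_G(H)\MMod$ is a semisimple abelian category, and so is their finite product. Transporting this along the equivalence, $\Rep^G_\mathbb{Q}\MMackey$ is a semisimple abelian tensor category, the tensor structure being the one of \Cref{Rec:tensor-Mackey} corresponding under the equivalence to the product of the tensor structures of \Cref{Rem:tensor-skew}.

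There is essentially no obstacle here: the theorem is a direct assembly of \Cref{thm.RightAdjointMackMod}, \Cref{Thm:Brauer_tensor} and \Cref{Prop:Brauer-for-RepQ}, and the only thing requiring any care is the bookkeeping of scalar extensions in the skew group ring (already handled by \Cref{Rem:scalar_ext}) and the observation that factors with zero coefficient ring can be discarded from the product.
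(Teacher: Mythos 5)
Your proposal is correct and follows exactly the paper's (one-line) proof: combine \Cref{thm.RightAdjointMackMod} and \Cref{Thm:Brauer_tensor} for $R=\Rep^G$, $Q=\mathbb Q$, identify the factors via \Cref{prop.RationalRepThev} (discarding the vanishing non-cyclic ones), and get semisimplicity from \Cref{Lem:semisimple}. The extra bookkeeping you supply on scalar extensions and the Galois action is accurate but not a departure from the paper's argument.
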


\begin{proof}
Just combine \Cref{Lem:semisimple}, Theorems \ref{thm.RightAdjointMackMod} and~\ref{Thm:Brauer_tensor}, and \Cref{prop.RationalRepThev}.
\end{proof}

By one of the main results of \cite[\S4]{DellAmbrogio14}, we know that the usual equivariant K-theory groups $k^G(A)(H):= K^H_0(A)\oplus K^H_1(A)$ of each object $A \in \KK^G$ can be naturally upgraded to form a Mackey module $k^G(A)$ over the Green functor~$\Rep^G$, as in \Cref{sec:Mackey}, which moreover is $\mathbb Z/2$-graded and countable.
 (Alternatively, the same conclusion can also be deduced from the Green 2-functor structure of $G\mapsto \KK^G$ via Hom-decategorification; see \cite[Example~12.20]{DellAmbrogio22}.)
After restricting to $G$-cell algebras, this defines a functor
\[
k^G \colon \Cell{G} \hookrightarrow \KK^G \longrightarrow \Rep^G\MMackey^{\mathbb Z/2}_{\aleph_1}
\]
into the category of $\mathbb Z/2$-graded countable Mackey $\Rep^G$-modules.
As a homological functor into an abelian category, it can be used to do relative homological algebra in $\Cell{G}$ and set up spectral sequences. 
This is the content of~\cite{DellAmbrogio14}. 
By restricting further to rational $G$-cell algebras, we obtain a (simpler) homological functor
\begin{equation}  \label{eq:rational-K}
k^G_\mathbb Q \colon \Cell{G}_\mathbb Q  \longrightarrow \Rep^G_\mathbb Q\MMackey^{\mathbb Z/2}_{\aleph_1}
\end{equation}
into the category of $\mathbb Z/2$-graded countable Mackey modules over~$\Rep^G_\mathbb Q$, which is what will be used in the proof of the next result.

\begin{proof}[Proof of \Cref{Thm:rational_cell_vs_Mack}]
First notice that the abelian category on the right-hand side is semisimple by \Cref{Lem:semisimple}, since the coefficient ring in each skew group ring is a field, and because the graded countable modules in a semisimple category of modules again form a semisimple abelian category.

We claim that \eqref{eq:rational-K} is a tensor equivalence; by composing it with the tensor equivalence of \Cref{thm.EquivRationalMackMod} (in the  graded countable variant as in \Cref{Rem:graded-cble-variants}), we then get the desired result.

To prove the claim, we may for instance set up relative homological algebra as in \cite{DellAmbrogio14} except that we use the homological functor $k^G_\mathbb Q$ instead of~$k^G$.
It follows by rationalizing both sides of \cite[Thm.\,4.9]{DellAmbrogio14}, together with \cite[Prop.\,5.6]{DellAmbrogio14}, that \eqref{eq:rational-K} is the universal $\mathcal I$-exact stable homological functor for the homological ideal $\mathcal I := \ker(k^G_\mathbb Q)$ of morphisms in $\Cell{G}_\mathbb Q$. 
The associated UCT spectral sequence \cite[Thm.\,5.12]{DellAmbrogio14} collapses on the first page for all $A,B\in \Cell{G}_\mathbb Q$, which implies that \eqref{eq:rational-K} is a fully faithful functor.
It is also essentially surjective, as the objects $k^G_{\mathbb Q}(\Cont(G/H))$ for $H\leq G$ generate the whole category of Mackey modules through direct sums and retracts (again by semisimplicity, and because they are projective generators). 
Thus \eqref{eq:rational-K} is an equivalence. 

To see that is it a symmetric monoidal functor, note that the (rational analogue) of \cite[Lemma 5.15]{DellAmbrogio14} yields for every subgroup $H\leq G$ a canonical isomorphism
\[
k^G_\mathbb Q(\Cont(G/H)) \otimes_{\Rep^G_\mathbb Q} k^G_\mathbb Q(B)
\overset{\sim}{\to} 
k^G_\mathbb Q\big( \Cont(G/H)\otimes B \big) 
\]
in $\Rep^G_\mathbb Q\MMackey^{\mathbb Z/2}_{\aleph_1}$, natural in $B\in \Cell{G}_\mathbb Q$.
Using the semisimplicity of $\Cell{G}_\mathbb Q$, we can easily extend these to an isomorphism 
\[
k^G_\mathbb Q(A) \otimes_{\Rep^G_\mathbb Q} k^G_\mathbb Q(B)
\overset{\sim}{\to} 
k^G_\mathbb Q\big( A\otimes B \big) 
\]
natural in $A,B\in \Cell{G}_\mathbb Q$.
This provides the required monoidal structure on \eqref{eq:rational-K}, where the unit component 
$\Rep^G_\mathbb Q \overset{\sim}{\to} k^G_\mathbb Q(\mathbb C)$ is the usual identification of the representation ring with the equivariant K-theory of a point.
\end{proof}

\bibliographystyle{alpha}
\newcommand{\etalchar}[1]{$^{#1}$}

\printindex
\end{document}